\newtheorem{theorem}{Theorem}[section]
\newtheorem*{theorem*}{Theorem}
\newtheorem{lemma}[theorem]{Lemma}
\newtheorem{corollary}[theorem]{Corollary}
\newtheorem*{conjecture*}{Conjecture}
\newtheorem{remark}[theorem]{Remark}
\newtheorem{thm}[theorem]{Theorem}
\newtheorem{lem}[theorem]{Lemma}
\newtheorem{cor}[theorem]{Corollary}
\newcommand{\ie}{{\em i.e.}\ }
\newcommand{\opname}[1]{\operatorname{\mathsf{#1}}}
\renewcommand{\mod}{\opname{mod}\nolimits}
\newcommand{\per}{\opname{per}\nolimits}
\newcommand{\add}{\opname{add}\nolimits}
\newcommand{\der}{\md}
\newcommand{\Sub}{\opname{Sub}}
\newcommand{\X}{\mathbb{X}}
\newcommand{\D}{\mathbb{D}}
\renewcommand{\P}{\mathbb{P}}
\newcommand{\ra}{\rightarrow}
\newcommand{\Hom}{\opname{Hom}}
\newcommand{\coh}{\opname{coh}}
\newcommand{\End}{\opname{End}}
\newcommand{\Tor}{\opname{Tor}}
\newcommand{\cc}{{\mathcal C}}
\newcommand{\co}{{\mathcal O}}
\newcommand{\mc}{\mathcal{C}}
\newcommand{\md}{\mathcal{D}}
\newcommand{\tauni}{\tau^{-1}}
\renewcommand{\widetilde}[1]{\widehat{#1}}
\numberwithin{figure}{section}
\begin{document}

\title{On cluster categories of weighted projective lines with at most three weights}\thanks{Partially supported by the National Natural Science Foundation of China (Grant No. 11971326) }

\author{Changjian Fu}
\address{Changjian Fu\\Department of Mathematics\\SiChuan University\\610064 Chengdu\\P.R.China}
\email{changjianfu@scu.edu.cn}
\author{Shengfei Geng}
\address{Shengfei Geng\\Department of Mathematics\\SiChuan University\\610064 Chengdu\\P.R.China}
\email{genshengfei@scu.edu.cn}
%\subjclass[2010]{16G10, 16E10, 18E30}
\keywords{generalized cluster category,  weighted projective line, $2$-Calabi-Yau category, quiver with potential. }
\maketitle

\begin{abstract}
Let $\X$ be a weighted projective line and $\mc_\X$ the associated cluster category. It is known that $\mc_\X$ can be realized as a generalized cluster category  of quiver with potential. In this note, under the  assumption that $\X$ has at most three weights or is of tubular type, we prove that if the generalized cluster category $\mc_{(Q,W)}$ of a Jacobi-finite non-degenerate  quiver with potential $(Q,W)$ shares a $2$-CY tilted algebra with $\mc_\X$, then $\mc_{(Q,W)}$ is triangle equivalent to $\mc_\X$. As a byproduct, a $2$-CY tilted algebra of $\mc_\X$ is determined by its quiver provided that $\X$ has at most three weights.
To this end, for any weighted projective line $\X$ with at most three weights, we also obtain a realization of $\mc_\X$ via Buan-Iyama-Reiten-Scott's construction of $2$-CY categories arising from preprojective algebras .
\end{abstract}

\tableofcontents
\section{Introduction}
 Motivated by the theory of cluster algebras \cite{FZ1}, there has recently been much interest centered around $\Hom$-finite $2$-Calabi-Yau (2-CY for short) triangulated  categories. Many  kinds of $\Hom$-finite $2$-CY  triangulated categories with cluster-tilting objects have been investigated, for example 
\begin{enumerate}
\item[$\bullet$] the acyclic cluster category $\mc_Q$ for a finite acyclic quiver $Q$ introduced by Buan-Marsh-Reineke-Reiten-Todorov \cite{BMRRT};
\item[$\bullet$] $2$-CY triangulated categories arising from preprojective algebras introduced by  Gei\ss-Leclerc-Schr\"{o}er  \cite{GLS06, GLS10} and Buan-Iyama-Reiten-Scott \cite{BIRSc};
\item[$\bullet$] the generalized cluster category $\mc_A$ for a finite dimensional algebra $A$ of global dimension $\leq 2$  and the generalized cluster category $\mc_{(Q,W)}$ for a quiver with potential $(Q,W)$ constructed by Amiot \cite{A09};
\item[$\bullet$] the cluster category $\mc_\X$ for a weighted projective line $\X$ studied by  Barot-Kussin-Lenzing  \cite{BKL}.
\end{enumerate}
It  is known that the acyclic cluster category $\mc_Q$ can be realized as a $2$-CY triangulated category arising from preprojective algebras \cite{GLS10}, while $2$-CY triangulated categories arising from preprojective algebras  and cluster categories of  weighted projective lines  can be realized  as  generalized cluster categories  of  finite dimensional algebras of global dimension $\leq 2$ \cite{A09,ART}. Furthermore,  the generalized cluster category $\mathcal{C}_A$ can be realized as the generalized cluster category $\mathcal{C}_{(Q,W)}$ for a quiver with potential $(Q,W)$ \cite{K11}. However, it is not clear that whether every cluster category of weighted projective line can be realized as a $2$-CY triangulated category arising from preprojective algebras. 
It is conjectured that each algebraic $\Hom$-finite  $2$-CY category over an algebraically closed field of characteristic zero can be realized as a generalized cluster category for quiver with potential \cite{A08, Y}.  

Let $K$ be an algebraically closed field and $\mc$ a  $\Hom$-finite algebraic $2$-CY triangulated category over $K$. The endomorphism algebra $\End_\mc(T)$ of a cluster-tilting object $T$ of $\mc$ is called a {\it $2$-CY tilted algebra}. An open question in cluster-tilting theory is that whether the $2$-CY tilted algebra $\End_\mc(T)$ determines the $2$-CY triangulated category $\mc$ up to triangle equivalence?
A remarkable progress has been made by Keller-Reiten \cite{KR}. Namely, if $\End_\mc(T)$ is isomorphic to the path algebra of a finite acyclic quiver $Q$, then $\mc$ is triangle equivalent to the acyclic cluster category $\mc_Q$. We also refer to \cite{A09, ART, AIR15} for some other progress on this question.
Inspired by Keller-Reiten's result \cite{KR} and Happel's classification theorem \cite{H01}, it is reasonable to conjecture that if $\End_\mc(T)$ is isomorphic to a $2$-CY tilted algebra of $\mc_\X$ for a weighted projective line $\X$, then $\mc\cong \mc_\X$.
The main result of this note provides a partial evidence of a  positive answer for the aforementioned question.
  \begin{theorem}[Theorem \ref{t:main results 2}+Theorem \ref{t:main results 3}]~\label{t:main-1}
  Let $\X$ be a weighted projective line with at most three weights or of tubular type and $\mc_\X$ the cluster category of $\X$. Let $(Q,W)$ be a Jacobi-finite quiver with non-degenerate potential and $\mc_{(Q,W)}$ the associated generalized cluster category. If there is a cluster-tilting object $T$ of $\mc_{(Q,W)}$ such that $\End_{\mc_{(Q,W)}}(T)$ is isomorphic to a $2$-CY tilted algebra of the cluster category $\mc_\X$, then $\mc_{(Q,W)}$ is triangle equivalent to $\mc_\X$.
  \end{theorem}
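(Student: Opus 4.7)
The plan has three main ingredients: realize $\mc_\X$ itself as a generalized cluster category of a quiver with potential; reduce both sides to quivers with potential whose Jacobian algebras are isomorphic; and lift this Jacobian algebra isomorphism to a right equivalence of quivers with potential. The first ingredient comes from the earlier results in the paper (the Buan-Iyama-Reiten-Scott realization when $\X$ has at most three weights) together with work of Amiot-Reiten-Todorov in the tubular case: there is a Jacobi-finite non-degenerate quiver with potential $(Q_\X,W_\X)$ with $\mc_{(Q_\X,W_\X)}\cong \mc_\X$. Write the given $2$-CY tilted algebra of $\mc_\X$ as $B=\End_{\mc_\X}(T_0)$ for some cluster-tilting object $T_0$. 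By Keller-Yang's mutation theorem, $T_0$ is reached from Amiot's canonical cluster-tilting object by iterated mutation, and the parallel sequence of quiver-with-potential mutations produces a Jacobi-finite non-degenerate $(Q_0,W_0)$, mutation-equivalent to $(Q_\X,W_\X)$, whose Jacobian algebra is isomorphic to $B$ and whose generalized cluster category is still $\mc_\X$.

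Applying the same machinery on the other side, iterated mutation from Amiot's canonical cluster-tilting object of $\mc_{(Q,W)}$ to $T$ yields a Jacobi-finite non-degenerate $(Q',W')$, mutation-equivalent to $(Q,W)$, with Jacobian algebra isomorphic to $\End_{\mc_{(Q,W)}}(T)\cong B$ and with $\mc_{(Q',W')}\cong \mc_{(Q,W)}$. The two quivers with potential $(Q',W')$ and $(Q_0,W_0)$ therefore have the same underlying quiver $Q_B$ (the Gabriel quiver of $B$) and isomorphic Jacobian algebras. If one can prove that any two non-degenerate potentials on $Q_B$ realizing $B$ as Jacobian algebra are right-equivalent, then $(Q',W')$ and $(Q_0,W_0)$ are right-equivalent as quivers with potential; Keller's invariance of generalized cluster categories under right-equivalence then gives $\mc_{(Q',W')}\cong \mc_{(Q_0,W_0)}$, which combined with the equivalences already produced yields $\mc_{(Q,W)}\cong \mc_\X$.

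The main obstacle is precisely this uniqueness of non-degenerate potentials on $Q_B$, which can fail for general quivers. To overcome it under our hypotheses the strategy is twofold. For $\X$ with at most three weights, the plan is first to establish the byproduct advertised in the abstract, namely that a $2$-CY tilted algebra of $\mc_\X$ is determined up to isomorphism by its Gabriel quiver; combined with a rigidity argument in the spirit of Derksen-Weyman-Zelevinsky, using the restricted shape of cycles arising in the mutation class of the quiver of the canonical algebra, this will pin down the potential up to right equivalence. For the tubular case with four weights (the type $(2,2,2,2)$ outside the three-weight restriction), uniqueness is instead established through the explicit tubular family structure of $\mc_\X$ and the action of its automorphism group on cluster-tilting objects, reducing the question to a finite verification on admissible potentials.
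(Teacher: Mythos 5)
Your overall skeleton for the three-weight case --- realize $\mc_\X$ as $\mc_{(Q_\X,W_\X)}$, mutate both sides down to the common Gabriel quiver $Q_B$, and invoke uniqueness of the non-degenerate potential up to right equivalence --- is essentially the paper's route to Theorem~\ref{t:main results 2}. But two points you treat as routine are genuine gaps. First, you mutate ``from Amiot's canonical cluster-tilting object of $\mc_{(Q,W)}$ to $T$'' without justifying that $T$ is reachable from $\Gamma(Q,W)$; connectedness of the cluster-tilting graph of the abstract category $\mc_{(Q,W)}$ is not free, and it is exactly what Lemma~\ref{connectedness of mc} supplies, by transporting connectedness from $\mc_\X$ through the Adachi--Iyama--Reiten bijection with support $\tau$-tilting modules. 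Second, your plan for the uniqueness lemma is inverted: in the paper the statement that a $2$-CY tilted algebra of $\mc_\X$ is determined by its quiver is a \emph{corollary} of the uniqueness of non-degenerate potentials (Lemma~\ref{unique potential}), not an ingredient of its proof. The actual mechanism is that every cycle of the squid quiver $Q_{T_{sq}}$ lies in a full subquiver which is mutation-equivalent to an acyclic quiver, so the Gei\ss--Labardini-Fragoso--Schr\"oer uniqueness criterion applies there and lifts to $Q_{T_{sq}}$; Derksen--Weyman--Zelevinsky mutation invariance together with connectedness of the cluster-tilting graph then transports uniqueness to every $Q_T$. Note also that the paper needs only uniqueness of non-degenerate potentials on $Q_T$, with no reference to which Jacobian algebra they realize, which is a cleaner target than the one you set yourself.

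The serious problem is your treatment of the tubular weight type $(2,2,2,2)$. There, uniqueness of non-degenerate potentials is simply false: on $Q^{(2,2,2,2)}$ there is a one-parameter family $W_\lambda$ of non-degenerate potentials (Lemma~\ref{potential on 2222}), so no ``finite verification on admissible potentials'' can succeed. Even your weakened target --- right equivalence of any two non-degenerate potentials realizing the same Jacobian algebra --- is not what the paper proves and is not obviously true: $J(Q^{(2,2,2,2)},W_\lambda)\cong J(Q^{(2,2,2,2)},W_{\lambda^{-1}})$, but no right equivalence between $W_\lambda$ and $W_{\lambda^{-1}}$ is established anywhere. The paper circumvents this by showing that the two mutated potentials on $Q^{(2,2,2,2)}$ are $W_{t_0}$ and $W_{t_1}$ with isomorphic Jacobian algebras, hence $t_1\in\{t_0,t_0^{-1}\}$, and then using the equivalence of the weighted projective lines $\X(2,2,2,2;t_0)$ and $\X(2,2,2,2;t_0^{-1})$ to obtain the triangle equivalence of the generalized cluster categories directly, with no right equivalence of potentials. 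Moreover, comparing Jacobian algebras along the two mutation sequences requires knowing that $\End(\mu_k(M))\cong J(\mu_k(Q_M,W))$ for a potential $W$ that is not a priori the canonical one; this needs the Buan--Iyama--Reiten--Smith vanishing condition, verified for tubular $(2,2,2,2)$ by Gei\ss--Gonzalez-Silva (Lemma~\ref{l:gg}), an ingredient absent from your plan.
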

  
  We also consider the relation between cluster categories of weighted projective lines  and $2$-CY triangulated categories arising from preprojective  algebras.  
  \begin{theorem}[Theorem~\ref{t:weighted via preprojective}]~\label{t:main-2}
  Let $\X$ be a weighted projective line with at most three weights. The cluster category $\mc_\X$ is triangle equivalent to a $2$-CY triangulated category arising from preprojective algebra.
  \end{theorem}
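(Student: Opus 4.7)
My plan is to construct, for every weighted projective line $\X$ with at most three weights, a Buan--Iyama--Reiten--Scott (BIRSc) $2$-Calabi--Yau category $\mc(w)=\Sub(\Pi/I_w)$ sharing a $2$-CY tilted algebra with $\mc_\X$, and then to invoke Theorem~\ref{t:main-1} to upgrade this coincidence to a triangle equivalence $\mc(w)\simeq\mc_\X$.

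First, I would pin down a distinguished cluster-tilting object $T\in\mc_\X$ coming from a canonical tilting bundle on $\X$. By the Amiot--Reiten--Todorov theorem $\mc_\X$ is triangle equivalent to the generalized cluster category $\mc_A$ of a finite-dimensional algebra $A$ of global dimension at most two; for three weights $\bp=(p_1,p_2,p_3)$ a concrete choice for $A$ is the squid algebra, whose Gabriel quiver is a three-branched star. Combined with Keller's realisation $\mc_A\simeq\mc_{(Q,W)}$, this exhibits the $2$-CY tilted algebra $B=\End_{\mc_\X}(T)$ as the Jacobian algebra of an explicit Jacobi-finite non-degenerate quiver with potential $(Q,W)$ whose underlying quiver is likewise star-shaped with the extra arrows prescribed by the relations of the squid algebra.

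Second, I would produce a BIRSc category with the same $2$-CY tilted algebra. Starting from a star-shaped acyclic quiver $Q'$ whose arm lengths are determined by $\bp$, I would form its completed preprojective algebra $\Pi=\Pi(Q')$ and choose an element $w$ of the Coxeter group $W_{Q'}$ whose reduced expression encodes the three weights (for the Dynkin subcases a square of a Coxeter element should suffice; in the Euclidean and wild subcases the word has to be adjusted so that the resulting BIRSc category has the expected number of indecomposable summands $p_1+p_2+p_3-1$). Using Buan--Iyama--Reiten--Smith's and Keller--Yang's description of $\Sub(\Pi/I_w)$ in terms of quivers with potential, I would compute the endomorphism algebra of the canonical cluster-tilting object of $\mc(w)$ and verify that it coincides with $B$ on the nose. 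The zero-, one- and two-weight cases reduce to the tame hereditary picture of Gei\ss--Leclerc--Schr\"oer, so the substantive content is the three-weight case, where I expect to separate the domestic, tubular and wild subcases and execute the combinatorial matching case by case.

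Once this matching is in place, Theorem~\ref{t:main-1} finishes the proof: $\mc(w)$ is automatically of the form $\mc_{(Q'',W'')}$ for a Jacobi-finite non-degenerate quiver with potential, and it shares the $2$-CY tilted algebra $B$ with $\mc_\X$, so the hypotheses of Theorem~\ref{t:main-1} are met. The principal obstacle is therefore concentrated in the second step: guessing the correct Weyl-group element $w$ for each three-weight datum, and executing the direct combinatorial identification of the two Jacobian algebras; once the quivers agree, non-degeneracy and Jacobi-finiteness pin down the potential up to right equivalence and the rest is formal.
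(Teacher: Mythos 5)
Your strategy is the same as the paper's: realize $\mc_\X$ via an explicit tilting object and a quiver with potential, build a Buan--Iyama--Reiten--Scott category $\underline{\Sub}\Lambda_w$ from a star-shaped acyclic quiver and a word $w$, match the two, and let the uniqueness of the non-degenerate potential (Lemma \ref{unique potential}) close the gap between ``same quiver'' and ``triangle equivalent.'' (The paper packages this last step as Lemma \ref{judge from quiver} rather than routing through Theorem \ref{t:main-1}, but that is only presentational: both rest on the Amiot--Reiten--Todorov realization $\underline{\Sub}\Lambda_w\simeq\mc_{(Q(w),W_w')}$ with $W_w'$ non-degenerate because it is cyclically equivalent to the rigid BIRSm potential.)

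Two things keep the proposal from being a proof. First, the combinatorial heart --- the explicit star quiver, the explicit reduced word, and the verification that $Q(w)$ is the quiver of a cluster-tilting object of $\mc_\X$ --- is exactly what you defer (``guessing the correct Weyl-group element \dots case by case''), and it is where all the content lies. The paper takes the star with three arms of lengths $p_i-2$ attached to a center $o$ and the word $w=s_os_{a_1}s_{b_1}s_{c_1}\,(s_os_{a_1}\cdots s_{a_{p_1-2}}s_{b_1}\cdots s_{b_{p_2-2}}s_{c_1}\cdots s_{c_{p_3-2}})^2$, shown to be reduced by the Oppermann--Reiten--Thomas criterion; your guess of ``a square of a Coxeter element'' is close in spirit but not the actual word, and the length bookkeeping ($\ell(w)-|Q_0|=p_1+p_2+p_3-1$) only works for the padded word above. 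Second, your chosen anchor on the $\mc_\X$ side --- the squid tilting object --- is a misstep: its cluster quiver contains the double arrow $\co\Rightarrow\co(\vec{c})$, and in the BIRSc construction a double arrow between two vertices of $Q(w)$ can only come from a doubled edge $d_{ij}=2$ of the underlying quiver (consecutive occurrences of the same letter contribute a single arrow), so the squid quiver cannot be matched to a $Q(w)$ coming from a simply-laced star. The paper avoids this by instead using $T^{(p_1,p_2,p_3)}=\co\oplus\co(\vec{c})\oplus\bigoplus_i\co(\vec{x}_i)\oplus\bigoplus_{i}\bigoplus_{j\le p_i-2}S_i^{[j]}$, whose cluster quiver has only single arrows and is literally equal to $Q(w)$ for the word above. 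Both issues are repairable, but as written the proposal postpones precisely the construction that proves the theorem.
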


The paper is structured as follows.  Section~\ref{s:preliminaries} provides the required background from cluster-tilting theory, quivers with potentials and generalized cluster categories. In Section \ref{s:weighted-projective-line},  we recollect basic properties for $\mc_\X$. For a given basic cluster-tilting object $T\in \mc_\X$, there is a canonical potential $W_T$ on the Gabriel quiver $Q_T$ of $\End_{\mc_\X}(T)$ via Keller's construction. We prove that the potential $W_T$ is non-degenerate.  Assume that $\X$ has at most three weights, we further prove that $W_T$ is the unique non-degenerate potential on $Q_T$ up to right equivalence in Section~\ref{ss:uniqueness}. The uniqueness plays a key role in the proof of Theorem \ref{t:main-1} in Section~\ref{s:main-result} and Theorem \ref{t:main-2} in Section \ref{s:main-result-2}.

Throughout this paper, let $K$ be an algebraically closed field. For a differential graded(=dg) $K$-algebra $B$, denote by $\der(B)$ the derived category of dg $B$-modules. Let  $\der^b(B)$ be the full subcategory of  $\der(B)$  formed by the dg $B$-modules whose homology is of finite total dimension over $K$. Denote by $\per B$ the perfect derived category of $B$, \ie the thick subcategory of $\der(B)$ generated by $B$. Let $\mc$ be a category and $M\in \mc$, denote by $\add M$ the subcategory of $\mc$ consisting of objects which are finite direct sum of direct summands of $M$.

\section{Preliminaries}\label{s:preliminaries}
\subsection{2-CY triangulated category and cluster-tilting objects}
Let $\mc$ be a $\Hom$-finite $K$-linear triangulated category with suspension functor $[1]$.  The category $\mc$ is  {\it $2$-Calabi-Yau} ( $2$-CY for short)  if  there exist bifunctorial isomorphisms 
\[\Hom_{\mc}(X, Y[1])\cong\D\Hom_{\mc}(Y,  X[1])\] 
for arbitrary $X,Y\in\mc$,
where $\D=\Hom_K(-, K)$ is  the duality over $K$.

Let  $\mc$ be a $2$-CY triangulated category.
An  object $T\in\mc$ is a {\it cluster-tilting object} of $\mc$ if for any object $X\in\mc$ with $\Hom_{\mc}(T, X[1])=0$, we have $X\in\add T$.
Let $T=\bigoplus_{i=1}^nT_i$ be a basic cluster-tilting object in $\mc$ and $\End_{\mc}(T)$ the endomorphism algebra of $T$.
Let $Q_T$ be the quiver of $T$, whose vertices correspond to the indecomposable direct summands of $T$ and the number of arrows from $T_i$ to $T_j$ is given by the dimension of the space of irreducible maps from $T_i$ to $T_j$. 
 It is known that the quiver $Q_T$ coincides with the Gabriel quiver of $\End_\mc(T)$.

For a given basic cluster-tilting object $T=\overline{T}\oplus T_k$ in $\cc$ with indecomposable direct summand $T_k$,  there exists a unique indecomposable object $T_k^*\in\mc$ such that $T_k^*\not\cong T_k$ and $\mu_{T_k}(T):=\overline{T}\oplus T_k^*$ is also a cluster-tilting object in $\mc$~ \cite{IY}.  We call $\mu_{T_k}(T)$  the {\it mutation  of $T$ at $T_k$}.  For a vertex $k\in Q_T$, we also denote $\mu_k(T)$  the mutation of $T$ at $T_k$.  A cluster-tilting object $T'$ is {\it reachable} from $T$, if we can obtain $T'$ from $T$ by a finite sequence of mutations. The {\it cluster-tilting graph} $\mathcal{G}_{ct}(\mc)$ of $\mc$ has as vertices the isomorphism classes of basic tilting objects of $\mc$, while two vertices $T$ and $T'$ are connected by an edge if and only if $T'$ is a mutation of $T$. For any basic cluster-tilting objects $T$ and $T'$, it is clear that $T'$ is reachable from $T$ if and only if they belong to the same connected component of $\mathcal{G}_{ct}(\mc)$.

\subsection{Quivers with potentials}
We follow \cite{DWZ,KY}.
Let $Q=(Q_0,Q_1)$ be a quiver, where $Q_0$ is the set of vertices and $Q_1$ is the set of arrows. Let $KQ$ be the path algebra of $Q$ over $K$.  The {\it complete path algebra} $K\langle\langle Q\rangle\rangle$ of $Q$ is the completion of $KQ$ with respect to the ideal  generated by the arrows of $Q$.  Let $\mathfrak{m}$ be the ideal of $K\langle\langle Q\rangle\rangle$ generated by arrows of $Q$. 
In particular, $K\langle\langle Q\rangle\rangle$ is a topological algebra via $\mathfrak{m}$-adic topology.
An element $W\in K\langle\langle Q\rangle\rangle$ is a {\it potential} on $Q$ if $W$ is a (possibly infinite) linear combination of cycles in $Q$.
Two potentials $W$ and $W'$ are {\it cyclically equivalent} if $W-W'$ lies in the closure  of the span of all elements of the form $\alpha_s\cdots\alpha_2\alpha_1-\alpha_1\alpha_s\cdots\alpha_2$, where $\alpha_s\cdots\alpha_2\alpha_1$ is a cyclic path.

Let $W$ be a potential on $Q$ such that $W\in \mathfrak{m}^2$  and no two cyclically equivalent
cyclic paths appear in the decomposition of $W$. Then the pair $(Q, W)$ is called a {\it quiver with potential} (QP for short). 
Two QPs $(Q,W)$ and $(Q',W')$ are {\it right equivalent} if $Q$ and $Q'$ have the same set of vertices and there exists an algebra isomorphism $\varphi: K\langle\langle Q\rangle\rangle\ra K\langle\langle Q'\rangle\rangle$ whose restriction on vertices is the identity map and $\varphi(W)$ and $W'$ are cyclically equivalent.

For every arrow $\alpha\in Q_1$, the {\it cyclic derivative} $\partial_\alpha$ is the continuous $K$-linear map from the closed subspace of $K\langle\langle Q\rangle\rangle$ generated by all cyclic paths of $Q$ to $K\langle\langle Q\rangle\rangle$ acting on cyclic paths by
 \[\partial_\alpha(\alpha_1\cdots\alpha_m)=\sum_{p:\alpha_p=\alpha}\alpha_{p+1}\cdots\alpha_m\alpha_1\cdots\alpha_{p-1}.\]
 Let $(Q,W)$ be a quiver with potential.
Denote by $J(W)$ the closure of the two-sided ideal of $K\langle\langle Q\rangle\rangle$ generated by  the elements $\partial_\alpha W$ for all $\alpha\in Q_1$.
The {\it Jacobian algebra} of $(Q,W)$ is 
\[J(Q,W):=K\langle\langle Q\rangle\rangle/J(W).\]
A QP $(Q,W)$ is {\it Jacobi-finite} if the Jacobian algebra $J(Q,W)$ is finite-dimensional. 
A QP $(Q,W)$ is {\it reduced} if $\partial_aW\in \mathfrak{m}^2$ for all arrows $\alpha\in Q_1$. For a reduced Jacobi-finite quiver with potential  $(Q,W)$, it is known that the Gabriel quiver of $J(Q,W)$ is $Q$.

\subsection{QP-mutation and non-degenerate potentials}
Let $(Q,W)$ be a quiver with potential and $k$ a vertex of $Q$.  Assume that $(Q,W)$ satisfies the following conditions:
\begin{itemize}
\item[(c1)] $Q$ has no loops;
\item[(c2)] $Q$ has no $2$-cycles at $k$;
\item[(c3)] no cyclic path occurring in the expansion of $W$ starts and ends at vertex $k$.
\end{itemize}
Note that under the condition $(c1)$, any potential is cyclically equivalent to a potential satisfying $(c3)$.
Derksen-Weyman-Zelevinsky~\cite{DWZ} defined an operation $\mu_k$ on $(Q,W)$, called the {\it $QP$-mutation at vertex $k$}. The resulting $\mu_k(Q,W)$ is a reduced quiver with potential. We refer to \cite[Section 5]{DWZ} and \cite[Section 2.4]{KY} for the definition.

\begin{lem}\cite[Thm 5.2]{DWZ}\label{right equivalence determined by mutation}
Let $(Q,P)$ be a quiver with potential and $k$ a vertex of $Q$ such that $\mu_k(Q,W)$ is well-defined. Then the right-equivalence class of $\mu_k(Q,W)$ is determined by the right-equivalence class of $(Q,W)$.
\end{lem}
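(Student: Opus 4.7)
The strategy is the one developed in \cite{DWZ}: factor the mutation operation $\mu_k$ as the composition of a \emph{pre-mutation} $\widetilde{\mu}_k$, which is manifestly functorial on representatives, followed by a \emph{reduction} step, which is shown to be well defined on right-equivalence classes via the Splitting Theorem. Concretely, given $(Q,W)$ satisfying (c1)--(c3), one first forms the QP $\widetilde{\mu}_k(Q,W) = (\widetilde{Q}, \widetilde{W})$ by reversing the arrows incident to $k$, adjoining a new arrow $[\beta\alpha]$ for every path $\beta\alpha$ through $k$, and setting $\widetilde{W} = [W] + \sum_{\beta\alpha \text{ through } k} [\beta\alpha]\alpha^*\beta^*$, where $[W]$ is obtained from $W$ by substituting $[\beta\alpha]$ for each composition $\beta\alpha$ through $k$. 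The quiver $\widetilde{Q}$ typically has $2$-cycles, so $\widetilde{\mu}_k(Q,W)$ need not be reduced; the reduced QP $\mu_k(Q,W)$ is then extracted via the Splitting Theorem.

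The first step is to show that pre-mutation descends to right equivalence. Suppose $\varphi \colon K\langle\langle Q\rangle\rangle \iso K\langle\langle Q'\rangle\rangle$ is a right equivalence from $(Q,W)$ to $(Q',W')$; in particular $Q$ and $Q'$ have the same vertex set and the same number of arrows between any two vertices, and $\varphi$ is the identity on idempotents. Since (c1)--(c3) are preserved by $\varphi$, one can lift $\varphi$ to an algebra homomorphism $\widetilde{\varphi} \colon K\langle\langle \widetilde{Q}\rangle\rangle \to K\langle\langle \widetilde{Q}'\rangle\rangle$ by prescribing its values on the new arrows: on arrows not incident to $k$, set $\widetilde{\varphi} = \varphi$; on the reversed arrows $\alpha^*$, $\beta^*$, one solves a fixed-point equation in the $\m$-adic topology that forces $\widetilde{\varphi}([\beta\alpha]\alpha^*\beta^*)$ to account for $\varphi(\beta)\varphi(\alpha)$ correctly. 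A direct verification, carried out term by term using the continuity of $\widetilde{\varphi}$, shows $\widetilde{\varphi}(\widetilde{W})$ is cyclically equivalent to $\widetilde{W}'$, so pre-mutation preserves right equivalence.

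The second step is the Splitting Theorem of \cite{DWZ}: every QP $(P,S)$ is right equivalent to the direct sum $(P_{\mathrm{red}}, S_{\mathrm{red}}) \oplus (P_{\mathrm{triv}}, S_{\mathrm{triv}})$ of a reduced QP and a trivial one, and both summands are determined up to right equivalence by the right-equivalence class of $(P,S)$. The heart of the argument is to use the non-degeneracy of the Hessian of the trivial part to eliminate $2$-cycles iteratively via a change of variables that converges in the $\m$-adic topology; the uniqueness statement follows by a standard linearisation argument on the quadratic part of the potential. Applying the Splitting Theorem to the two pre-mutated QPs $\widetilde{\mu}_k(Q,W)$ and $\widetilde{\mu}_k(Q,W')$, which we have shown to be right equivalent, yields that their reduced parts $\mu_k(Q,W)$ and $\mu_k(Q,W')$ are right equivalent, completing the proof.

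The main obstacle is the Splitting Theorem, which is the technical backbone of the argument; the pre-mutation step is essentially formal once (c1)--(c3) are in force. I would therefore treat the pre-mutation compatibility quickly and spend most of the write-up on the splitting/uniqueness part, or else cite \cite[Thm.~4.6, Thm.~5.2]{DWZ} directly as is done in the statement of Lemma~\ref{right equivalence determined by mutation}.
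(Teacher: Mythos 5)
The paper offers no proof of this lemma---it is imported verbatim from Derksen--Weyman--Zelevinsky---and your sketch is an accurate reconstruction of their argument: premutation is shown to preserve right equivalence, and the Splitting Theorem then guarantees that passing to the reduced part is well defined on right-equivalence classes. The only imprecision is in how the extension $\widetilde{\varphi}$ is produced on the new arrows (DWZ treat the linear and unitriangular parts of $\varphi$ separately rather than via a single fixed-point equation), but this does not affect the structure or correctness of the proof.
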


 If $Q$ has no loops nor $2$-cycles, then $\mu_k(Q,W)$ is well-defined for any vertex $k$ of $Q$. In this case, the quiver with potential $(Q,W)$ is {\it $2$-acyclic}. A potential $W$ is {\it non-degenerate} on $Q$, if for any sequence $i_1,\cdots, i_t$ of vertices, all QPs $\mu_{i_1}(Q,W)$, $\mu_{i_2}\mu_{i_1}(Q,W)$,$\ldots$, $\mu_{i_t}\cdots \mu_{i_2}\mu_{i_1}(Q,W)$ are $2$-acyclic. We also call $(Q,W)$ a non-degenerate QP.
A potential $W$ on $Q$ is {\it rigid} if every cycle in $Q$ is cyclically equivalent to an element of the Jacobian  ideal $J(W)$. According to \cite[Cor 8.2]{DWZ}, every rigid potential is non-degenerate.

Let $(Q,W)$ be a quiver with potential.
Let $I$ be a subset of  $Q_0$. Let $(Q|_I, W|_I)$ be the restriction of $(Q,W)$ to $I$.
\begin{lem}\cite[Prop 8.9]{DWZ}\cite[Cor 22]{F}\label{keep non-degenerate}
 If $(Q,W)$ is non-degenerate (resp. rigid), then $(Q|_{I},W|_{I})$ is non-degenerate (resp. rigid).
\end{lem}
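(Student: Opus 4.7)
The plan is to convert the restriction operation into an algebra quotient. Consider the canonical continuous surjective $K$-algebra homomorphism
\[
\pi_I \colon K\langle\langle Q\rangle\rangle \longrightarrow K\langle\langle Q|_I\rangle\rangle
\]
obtained by quotienting out the closed two-sided ideal generated by the idempotents $e_i$ for $i\notin I$. Killing these idempotents automatically kills every arrow with at least one endpoint outside $I$, so concretely $\pi_I$ annihilates every path that touches a vertex outside $I$, and it is the identity on the subalgebra $K\langle\langle Q|_I\rangle\rangle \hookrightarrow K\langle\langle Q\rangle\rangle$. Two easy consequences carry the proof: $\pi_I(W) = W|_I$; and for every arrow $\alpha$ of $Q|_I$, one has $\pi_I(\partial_\alpha W) = \partial_\alpha(W|_I)$, because any cyclic path of $W$ not lying in $Q|_I$ still contains an outside vertex after the arrow $\alpha$ (whose two endpoints are in $I$) is deleted; while $\pi_I(\partial_\alpha W) = 0$ for every arrow $\alpha$ with at least one endpoint outside $I$.

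For the rigidity half, take any cycle $c$ in $Q|_I$ and, using rigidity of $W$, write $c = \sum_{\alpha \in Q_1} u_\alpha \, \partial_\alpha W \, v_\alpha + r$ inside $K\langle\langle Q\rangle\rangle$, where $r$ is a closed-linear combination of cyclic commutators. Applying $\pi_I$ fixes the left-hand side, kills the terms with $\alpha \notin (Q|_I)_1$ by the observations above, and sends each surviving term to an element of $J(W|_I)$. Since $\pi_I$ is an algebra homomorphism it sends cyclic commutators to cyclic commutators, so $\pi_I(r)$ is still cyclically equivalent to $0$ in $K\langle\langle Q|_I\rangle\rangle$. Hence $c$ is cyclically equivalent to an element of $J(W|_I)$, which proves $W|_I$ is rigid; by \cite[Cor 8.2]{DWZ} rigidity implies non-degeneracy, so this already settles the rigid statement.

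For the non-degenerate half the Jacobian ideal is no longer available and the plan is instead to show that mutation commutes with restriction: for every vertex $k\in I$ at which $\mu_k$ is defined on $(Q,W)$, there is a right-equivalence
\[
\mu_k(Q,W)\big|_I \;\simeq\; \mu_k(Q|_I, W|_I).
\]
The pre-mutation step $\tilde\mu_k$ (reversing arrows at $k$, creating composition arrows $[\beta\alpha]$ and correcting the potential by $\sum [\beta\alpha]\alpha^{*}\beta^{*}$) commutes with restriction by direct inspection, since any arrow incident to $k\in I$ has its other endpoint in $I$ if and only if it is an arrow of $Q|_I$. The reduction step, which splits off the trivial part of $(\tilde Q, \tilde W)$, is the main technical obstacle: one has to verify that the change-of-variables realizing the splitting can be chosen to preserve the closed subalgebra $K\langle\langle \tilde Q|_I\rangle\rangle \hookrightarrow K\langle\langle \tilde Q\rangle\rangle$, so that restricting then reducing agrees (up to right-equivalence) with reducing then restricting. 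This rests on the uniqueness up to right-equivalence of the trivial-plus-reduced decomposition together with the observation that the restriction of a trivial QP is trivial. Once the commutation is established, non-degeneracy of $W|_I$ follows by induction on the length of a mutation sequence: for any vertices $i_1,\ldots,i_t \in I$, iterated mutation gives $\mu_{i_t}\cdots\mu_{i_1}(Q|_I,W|_I) \simeq \mu_{i_t}\cdots\mu_{i_1}(Q,W)\big|_I$, and the right-hand side is $2$-acyclic by non-degeneracy of $W$, hence so is the left-hand side.
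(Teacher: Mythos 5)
The paper offers no proof of this lemma: it is imported from \cite[Prop 8.9]{DWZ} (rigid case) and \cite[Cor 22]{F} (non-degenerate case), so your proposal has to be judged against those sources. Your rigidity argument is complete and is essentially the one in \cite{DWZ}: the projection $\pi_I$ kills $\partial_\alpha W$ whenever $\alpha$ has an endpoint outside $I$, sends $\partial_\alpha W$ to $\partial_\alpha(W|_I)$ when $\alpha$ is an arrow of $Q|_I$, and sends cyclic commutators to cyclic commutators (or to zero), so applying it to a presentation of a cycle $c$ of $Q|_I$ as an element of $J(W)$ plus commutators does exactly what you claim. The only cosmetic caveat is that a general element of the closed two-sided ideal is a convergent series $\sum_{\alpha,n}u_{\alpha,n}\,\partial_\alpha W\,v_{\alpha,n}$ rather than one term per arrow; this is harmless because $\pi_I$ is continuous.

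The non-degenerate half follows the right strategy (it is the one in \cite{F}), and your check that premutation commutes on the nose with restriction at a vertex $k\in I$ is correct. But the step you yourself flag --- that the reduction step commutes with restriction up to right-equivalence --- is a genuine gap as written, and the formulation ``the change of variables can be chosen to preserve the subalgebra $K\langle\langle \tilde Q|_I\rangle\rangle$'' is the hard way in. The clean way to close it is to first prove that restriction descends to right-equivalence classes: if $\varphi$ is a right-equivalence of $K\langle\langle Q\rangle\rangle$, then $\pi_I\circ\varphi$ restricted to the subalgebra $K\langle\langle Q|_I\rangle\rangle$ is again a right-equivalence, because the linear part of $\varphi$ preserves each space of parallel arrows (hence restricts to an automorphism of the arrow span of $Q|_I$), and every path occurring in $\varphi(\beta)$ for an arrow $\beta$ with an endpoint outside $I$ again has an endpoint outside $I$ and so dies under $\pi_I$; one checks as in your rigidity argument that this map carries $W|_I$ to something cyclically equivalent to $\varphi(W)|_I$. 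Granting this, the splitting theorem gives a right-equivalence from $(\tilde Q,\tilde W)$ to the direct sum of its trivial and reduced parts; since the restriction of a trivial (resp.\ reduced) QP is trivial (resp.\ reduced) --- a $2$-cycle either lies entirely in $Q|_I$ or is killed entirely --- the uniqueness statement in the splitting theorem identifies the reduction of $(\tilde Q|_I,\tilde W|_I)$ with the restriction of the reduction, up to right-equivalence. With that paragraph inserted, your induction on the length of the mutation sequence is correct and the lemma follows.
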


\subsection{QP-mutation and quiver mutation}
 Let $Q$ be a finite quiver without loops nor $2$-cycles and $k$ a vertex. The \textit{quiver mutation} $\mu_k(Q)$ of $Q$ at vertex $k$ is the  quiver obtained from $Q$ as follows:
 	\begin{itemize}
 		\item for each subquiver $i\xrightarrow{\beta}k\xrightarrow{\alpha}j$, we add a new arrow $[\alpha\beta]:i\to j$;
 		\item we reverse all arrows with source or target $k$;
 		\item we remove the arrows in a maximal set of pairwise disjoint $2$-cycles.
 		\end{itemize}
Quiver mutations have close relation with QP-mutations for non-degenerate QPs. 		
 		\begin{lem}\cite[Prop 7.1]{DWZ}\label{correspondence with mutation}
Let $(Q,W)$  be a non-degenerate quiver with potential.  Suppose that $(Q',W')=\mu_i(Q,W)$ for some vertex $i$, then $Q'=\mu_i(Q)$.
\end{lem}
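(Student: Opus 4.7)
The plan is to compare the two mutation procedures step by step and use the non-degeneracy hypothesis in a critical way at the reduction stage.

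Recall that the $QP$-mutation $\mu_i(Q,W)$ is defined in two stages. First one performs a \emph{pre-mutation} $\widetilde{\mu}_i(Q,W)=(\widetilde Q,\widetilde W)$, where $\widetilde Q$ is obtained from $Q$ by (i) adjoining, for each pair of arrows $\beta\colon a\to i$ and $\alpha\colon i\to b$, a new arrow $[\alpha\beta]\colon a\to b$, and (ii) reversing every arrow incident with $i$. The modified potential $\widetilde W$ has the form $[W]+\sum_{\alpha,\beta}[\alpha\beta]\beta^{*}\alpha^{*}$, where $[W]$ is obtained from $W$ by replacing each composition through $i$ by the corresponding $[\alpha\beta]$. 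Then $\mu_i(Q,W)$ is the reduced part of $(\widetilde Q,\widetilde W)$ in the sense of the splitting theorem of Derksen-Weyman-Zelevinsky. Step (i) together with step (ii) of the pre-mutation match precisely the first two bullets in the definition of the quiver mutation $\mu_i(Q)$, so the two quivers $\widetilde Q$ and the intermediate quiver in the definition of $\mu_i(Q)$ coincide.

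The remaining task is to identify the reduction step on the QP side with the third bullet of quiver mutation, namely the removal of a maximal collection of pairwise disjoint $2$-cycles. First I would invoke the splitting theorem to write $(\widetilde Q,\widetilde W)\cong(\widetilde Q_{\mathrm{red}},\widetilde W_{\mathrm{red}})\oplus(Q_{\mathrm{triv}},W_{\mathrm{triv}})$, where the trivial part has a potential which is a sum of terms $c_k a_k b_k$ with each $(a_k,b_k)$ being a $2$-cycle in $\widetilde Q$ and with the arrows $a_k,b_k$ pairwise distinct. In particular, at the level of underlying quivers, passing from $\widetilde Q$ to the quiver of $\mu_i(Q,W)$ amounts to deleting some collection $\mathcal{S}$ of pairwise disjoint $2$-cycles of $\widetilde Q$.

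Now I would bring in non-degeneracy. Since $(Q,W)$ is non-degenerate, $\mu_i(Q,W)$ is by definition $2$-acyclic, so the quiver $\widetilde Q\setminus\mathcal{S}$ contains no $2$-cycle. This forces $\mathcal{S}$ to be a \emph{maximal} set of pairwise disjoint $2$-cycles in $\widetilde Q$: any additional $2$-cycle, disjoint from $\mathcal{S}$, would survive in $\widetilde Q\setminus \mathcal{S}$ and contradict $2$-acyclicity. Since the outcome of removing any maximal set of pairwise disjoint $2$-cycles from a fixed quiver is well-defined up to isomorphism fixing the vertex set, I can conclude that the underlying quiver of $\mu_i(Q,W)$ coincides with $\mu_i(Q)$, as required.

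The main obstacle is the faithful bookkeeping in the reduction step: one must verify that the arrows killed during reduction are precisely those entering nontrivial quadratic terms of $\widetilde W$ and hence come in $2$-cycle pairs. This is exactly what the splitting theorem of \cite{DWZ} provides, so invoking it cleanly is the decisive move; once that is in place, non-degeneracy immediately upgrades "some set of $2$-cycles removed" to "a maximal set of $2$-cycles removed", and the lemma follows.
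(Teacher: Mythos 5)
Your argument is correct, and it reconstructs essentially the proof given in \cite{DWZ}: the paper itself states Lemma~\ref{correspondence with mutation} as a citation of \cite[Prop 7.1]{DWZ} and offers no proof of its own. The two key points you isolate are exactly the right ones --- the splitting theorem guarantees that reduction deletes a set of \emph{pairwise disjoint} $2$-cycles from the premutated quiver $\widetilde{Q}$ (which visibly agrees with the intermediate quiver of the combinatorial mutation), and non-degeneracy, via the $2$-acyclicity of $\mu_i(Q,W)$, forces that set to be maximal, after which the well-definedness of ``delete a maximal set of pairwise disjoint $2$-cycles'' finishes the identification.
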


 If $Q'$ can be obtained from $Q$ by a finite sequence of quiver mutations,  then  $Q'$ and $Q$ are {\it mutation equivalent}.  The following is well-known (cf. \cite[Lem 9.2]{GLaS} for instance).
 \begin{lemma}~\label{l:glas}
 Assume that $Q$ is mutation equivalent to an acyclic quiver. Then there exists a unique non-degenerate potential on $Q$ up to right equivalence.
\end{lemma}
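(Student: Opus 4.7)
The plan is to reduce both existence and uniqueness to the trivial observation that an acyclic quiver supports only the zero potential, combined with the involutivity of QP-mutation up to right equivalence.

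For existence, I fix a finite acyclic quiver $Q_0$ mutation equivalent to $Q$ together with a sequence of vertices $i_1,\ldots,i_t$ realising this equivalence at the level of quivers. Since $Q_0$ has no oriented cycles, the zero potential on $Q_0$ is vacuously rigid, hence non-degenerate by \cite[Cor 8.2]{DWZ}. Iteratively applying QP-mutations at $i_1,\ldots,i_t$ starting from $(Q_0,0)$ then produces at each step a non-degenerate QP (by the very definition of non-degeneracy), and Lemma~\ref{correspondence with mutation} identifies the underlying quiver after $t$ steps with $Q$. This yields a non-degenerate potential on $Q$.

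For uniqueness, I take two non-degenerate QPs $(Q,W)$ and $(Q,W')$ and apply the reverse mutation sequence $i_t,\ldots,i_1$ to each. Non-degeneracy ensures that every intermediate mutation is admissible, and Lemma~\ref{correspondence with mutation} shows that after $t$ steps the underlying quiver is $Q_0$. Since $Q_0$ is acyclic it admits no non-zero potential, so both resulting reduced QPs equal $(Q_0,0)$, and in particular are right equivalent. Mutating back along $i_1,\ldots,i_t$ and invoking the involutivity of QP-mutation up to right equivalence (proved in \cite{DWZ}) together with Lemma~\ref{right equivalence determined by mutation} (which propagates right equivalence through a mutation), we recover QPs right-equivalent respectively to $(Q,W)$ and $(Q,W')$. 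Hence $(Q,W)$ and $(Q,W')$ are themselves right equivalent.

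The argument is essentially formal once all the lemmas cited in the excerpt are available. The only mildly delicate point is verifying that each mutation along both sequences is admissible; this is precisely the content of the non-degeneracy hypothesis, which guarantees that every iterated mutation remains $2$-acyclic. No computation inside the Jacobian ideal is required, and no control over the shape of the potential is needed beyond the vanishing of any potential on an acyclic quiver.
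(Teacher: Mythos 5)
Your proof is correct. The paper does not prove Lemma~\ref{l:glas} at all --- it simply cites \cite[Lem 9.2]{GLaS} as well known --- and your argument (mutate back to the acyclic quiver, where the only potential is zero, then return using involutivity of QP-mutation and Lemma~\ref{right equivalence determined by mutation}) is precisely the standard argument behind that reference, so there is nothing genuinely different to compare.
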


\subsection{Generalized cluster category $\mc_{(Q,W)}$}
Let $(Q,W)$ be a  quiver with potential, 
 denote by $\Gamma(Q,W)$ the complete Ginzburg dg algebra, we refer to \cite[Section 4.2]{G} and \cite[Section 2.6]{KY} for the precisely construction. It is known that $\Gamma(Q,W)$ is a non positive dg algebra and the  Jacobian algebra $J(Q,W)$ is the zeroth homlogy of  $\Gamma(Q,W)$. 
Keller\cite{K11} proved that the perfect derived category $\per \Gamma(Q,W)$ contains $\der^b(\Gamma(Q,W))$.
The {\it generalized cluster category} $\mc_{(Q,W)}$ associated
to $(Q,W)$ is defined to be the  Verdier quotient \[\mc_{(Q,W)}:=\per\Gamma(Q,W)/\md^b(\Gamma(Q,W)).\] 
In general, $\mc_{(Q,W)}$ has infinite dimensional $\Hom$-space.

\begin{lem}\cite[Lem 2.9]{KY}\label{l:KY1}
Let $(Q,W)$ and $(Q',W')$ be two quivers with potentials.
If $(Q,W)$ and $(Q',W')$ are right equivalent, then the complete Ginzburg dg
algebras $\Gamma(Q,W)$ and $\Gamma(Q',W')$ are isomorphic to each other. In particular, $\mc_{(Q,W)}$ is triangle equivalent to $\mc_{(Q',W')}$.
\end{lem}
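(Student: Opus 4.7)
The strategy is to upgrade the degree zero right equivalence $\varphi\colon K\langle\langle Q\rangle\rangle\to K\langle\langle Q'\rangle\rangle$ to a dg algebra isomorphism $\widetilde{\varphi}\colon \Gamma(Q,W)\to \Gamma(Q',W')$ that agrees with $\varphi$ in degree $0$. Since the complete Ginzburg dg algebra is freely generated (as a graded algebra) over the degree zero part by the ``dual'' arrows $a^*$ in degree $-1$ (one for each arrow $a$ of $Q$) and the loops $t_i$ in degree $-2$ (one for each vertex), to give $\widetilde{\varphi}$ it suffices to define its value on these generators in a way that commutes with the differential.

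First I would reduce to the case where $\varphi(W)$ equals $W'$ on the nose, rather than being only cyclically equivalent. Cyclic equivalence of potentials is exactly the equivalence relation that preserves all cyclic derivatives $\partial_a W$, so replacing $W'$ by $\varphi(W)$ changes the Ginzburg algebra only by a harmless change of variables that fixes all $\partial_{a'}(-)$; in particular the Jacobian ideal and the differential on the degree $-1$ generators are unchanged. Thus I may assume $\varphi(W)=W'$.

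Next I would define $\widetilde{\varphi}$ on the degree $-1$ generators. For each arrow $a'$ of $Q'$, the construction of $\Gamma$ dictates $d(a'^*)=\partial_{a'}W'$; so I need elements $\widetilde{\varphi}(a'^*)\in \Gamma(Q,W)_{-1}$ whose differentials match $\partial_{a'}W'$ once transported via $\varphi$. The standard trick is to use the noncommutative chain rule: since $\varphi$ is an algebra isomorphism fixing vertices, there is a formal identity expressing $\partial_{a'}\varphi(W)$ as a sum $\sum_a u_a (\partial_a W) v_a$ for certain $u_a,v_a\in K\langle\langle Q\rangle\rangle$ determined by the Jacobian matrix of $\varphi$. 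Setting $\widetilde{\varphi}(a'^*):=\sum_a v_a\, a^*\, u_a$ then gives $d\widetilde{\varphi}(a'^*)=\varphi(\partial_{a'}W')$, as required. One checks that the matrix of $\varphi$ on arrows is invertible modulo $\mathfrak{m}$ and hence invertible in the complete algebra, so that the extension on the $a'^*$ is itself an isomorphism onto the degree $-1$ generators.

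Finally I would define $\widetilde{\varphi}(t'_i)$ by requiring $d\widetilde{\varphi}(t'_i)$ to equal $\widetilde{\varphi}$ applied to $d(t'_i)=\sum_{a'}e_i[a',a'^*]e_i$, and check that such an element exists in $\Gamma(Q,W)_{-2}$: this amounts to solving a linear equation whose solvability follows from $d^2=0$ on the target together with completeness of the Ginzburg algebra in the $\mathfrak{m}$-adic topology. The main technical obstacle is exactly this bookkeeping with the noncommutative chain rule and verifying compatibility with $d$ on the $t_i$; conceptually, however, everything is forced once $\varphi$ is given. The ``In particular'' part is then immediate: a dg algebra isomorphism induces a triangle equivalence $\per\Gamma(Q,W)\iso \per\Gamma(Q',W')$ which preserves $\der^b$ of each, so it descends to the Verdier quotients and yields $\mc_{(Q,W)}\simeq \mc_{(Q',W')}$.
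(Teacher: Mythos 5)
You should first note that the paper offers no proof of this statement at all: it is quoted verbatim from Keller--Yang \cite[Lem 2.9]{KY}, so the only meaningful comparison is with the proof in that source. Your overall strategy -- reduce to $\varphi(W)=W'$ using the fact that cyclically equivalent potentials have identical cyclic derivatives (so the two Ginzburg dg algebras literally coincide), then extend $\varphi$ to the degree $-1$ generators via the noncommutative chain rule $\partial_{a'}\varphi(W)=\sum_a u_a\,\varphi(\partial_a W)\,v_a$ -- is exactly the strategy of the cited proof, and the degree $0$ and degree $-1$ parts of your argument are sound (modulo a persistent confusion between $\varphi$ and $\varphi^{-1}$: as written, your formula multiplies elements of $K\langle\langle Q'\rangle\rangle$ against $\partial_a W\in K\langle\langle Q\rangle\rangle$, so you are really extending $\varphi^{-1}$ to a map $\Gamma(Q',W')\to\Gamma(Q,W)$; this is harmless but should be fixed).

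The genuine gap is in your treatment of the degree $-2$ generators $t'_i$. You claim that the solvability of $d\,\widetilde{\varphi}(t'_i)=\widetilde{\varphi}(d t'_i)$ ``follows from $d^2=0$ on the target together with completeness.'' But $d^2=0$ only tells you that $\widetilde{\varphi}(d t'_i)$ is a \emph{closed} element of degree $-1$; what you need is that it is \emph{exact}, and closed does not imply exact here because $H^{-1}$ of a complete Ginzburg dg algebra is nonzero in general (its vanishing is essentially the statement that $J(Q,W)$ has no ``hidden'' deformations, which fails already for many Jacobi-finite QPs). What actually has to be proved is the explicit identity
\[
\sum_{a'\in Q'_1}\bigl[\widetilde{\varphi}(a'),\widetilde{\varphi}(a'^*)\bigr]\;=\;\sum_{a\in Q_1}[a,a^*]\;+\;d(\eta)
\]
for an explicitly constructed $\eta$ of degree $-2$ (a sum of paths containing two starred arrows), after which one sets $\widetilde{\varphi}(t'_i):=t_i+e_i\eta e_i$. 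Producing $\eta$ and verifying this identity is precisely where the real work in Keller--Yang's proof lies (alternatively, one can invoke the functoriality of the deformed $3$-Calabi--Yau completion $\Pi_3(\widehat{KQ},[W])$ in the pair $(\text{algebra},\,\text{class in }HC_0)$, which sidesteps the computation entirely); your sketch replaces this step with an appeal to a homological vanishing that does not hold. The ``in particular'' clause at the end is fine: a dg isomorphism induces an equivalence of derived categories restricting to $\per$ and to $\md^b$, hence descends to the Verdier quotients.
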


\begin{lem}\cite[Thm 3.5]{A09}\label{Hom-finite}
 Let $(Q, W)$ be a Jacobi-finite quiver with potential. The generalized cluster category $\mc_{(Q,W)}$ associated to $(Q, W)$ is $\Hom$-finite and 2-CY. Moreover, the image  of the free module $\Gamma(Q,W)$ in  $\mc_{(Q,W)}$ is a cluster-tilting object. Its endomorphism algebra is isomorphic to the Jacobian algebra $J(Q,W)$.
\end{lem}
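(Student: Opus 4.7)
The plan is to follow Amiot's original strategy, whose key input is that the complete Ginzburg dg algebra $\Gamma=\Gamma(Q,W)$ is \emph{bimodule $3$-Calabi-Yau}: directly from the shape of Ginzburg's self-dual resolution of $\Gamma$ as a $\Gamma$-bimodule, one obtains an isomorphism $\RHom_{\Gamma^e}(\Gamma,\Gamma^e)\cong\Gamma[-3]$ in the derived category of bimodules, independent of Jacobi-finiteness. Since $H^0(\Gamma)=J(Q,W)$ is assumed finite-dimensional, Keller's result recalled just before the statement gives $\der^b(\Gamma)\subset\per\Gamma$, so the Verdier quotient $\mc_{(Q,W)}=\per\Gamma/\der^b(\Gamma)$ is meaningful and the simple dg modules are perfect.

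The heart of the proof is the simultaneous verification of $\Hom$-finiteness and the $2$-CY property. For this I would introduce a fundamental domain $\mathcal{F}\subset\per\Gamma$, obtained as the intersection of the standard co-aisle with a shift of the standard aisle of the canonical t-structure on $\per\Gamma$; the projection $\mathcal{F}\to\mc_{(Q,W)}$ is essentially surjective, and morphisms in the quotient may be computed by roofs landing in $\mathcal{F}$. Finite-dimensionality of $\Hom_{\mc}(X,Y)$ then reduces to finite-dimensionality of morphisms in $\per\Gamma$ on bounded pieces, which propagates from $H^{\leq 0}(\Gamma)$ being degree-wise finite-dimensional with finite-dimensional $H^0$. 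The bimodule $3$-CY structure provides a Serre-type duality $\D\Hom_{\per\Gamma}(X,Y)\cong\Hom_{\per\Gamma}(Y,X[3])$ whenever one factor lies in $\der^b(\Gamma)$. Substituting $Y[1]$ and passing to the quotient, the two boundary terms lying in $\der^b(\Gamma)$ are killed and the desired identity $\D\Hom_{\mc}(X,Y[1])\cong\Hom_{\mc}(Y,X[1])$ drops out.

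For the cluster-tilting claim, note that $\Gamma$ is concentrated in non-positive degrees, so $\Hom_{\per\Gamma}(\Gamma,\Gamma[i])=H^i(\Gamma)$ vanishes for $i<0$ and equals $J(Q,W)$ for $i=0$; I would then check that no morphism $\Gamma\to\Gamma[i]$ with $i=0,1$ can factor nontrivially through an object of $\der^b(\Gamma)$, yielding $\End_{\mc}(\Gamma)\cong J(Q,W)$ and $\Hom_{\mc}(\Gamma,\Gamma[1])=0$. Conversely, given $X\in\mc_{(Q,W)}$ with $\Hom_{\mc}(\Gamma,X[1])=0$, one represents $X$ by an object of $\mathcal{F}$ and, using the standard t-structure on $\per\Gamma$ together with Nakayama's lemma for the finite-dimensional algebra $J(Q,W)$, concludes $X\in\add\Gamma$. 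The main technical obstacle, as I see it, is the construction of $\mathcal{F}$ and the finite-dimensionality estimate on its morphism spaces; this is where Jacobi-finiteness really enters. Once this is secured, $\Hom$-finiteness, the $2$-CY duality, and the cluster-tilting property all follow formally from the bimodule $3$-CY structure of $\Gamma$ and the t-structure calculus inside $\mathcal{F}$.
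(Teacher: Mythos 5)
This statement is quoted in the paper as \cite[Thm 3.5]{A09} and no proof is given there, so the only meaningful comparison is with Amiot's original argument, which your sketch reproduces faithfully: the bimodule $3$-Calabi-Yau property of the completed Ginzburg dg algebra, the fundamental domain $\mathcal{F}$ cut out by the canonical t-structure, the derivation of $\Hom$-finiteness and the $2$-CY duality from the relative Serre-duality formula, and the identification of $\End_{\mc}(\Gamma)$ with $H^0(\Gamma)=J(Q,W)$. Your outline is correct and takes essentially the same route as the cited source; no gaps beyond the technical details you yourself flag (the morphism computation in $\mathcal{F}$), which is exactly where Amiot's proof does the work.
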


Let $(Q,W)$ be a Jacobi-finite quiver with potential. By Lemma \ref{Hom-finite}, $\Gamma(Q,W)$ is a cluster-tilting object in the $2$-Calabi-Yau triangulated category $\mc_{(Q,W)}$.  Therefore, we can form the mutation of $\Gamma(Q,W)$ at any indecomposable direct summand. In particular, for each vertex $i$, we have the mutation  $\mu_i(\Gamma(Q,W))$ of $\Gamma(Q,W)$ at $e_i\Gamma(Q,W)$, where $e_i$ is the primitive idempotent associated to $i$.

The following is a direct consequence of \cite[Thm 3.2]{KY} and Lemma \ref{Hom-finite}.
\begin{lemma}\label{l:KY2}
Let $(Q,W)$ be a  Jacobi-finite quiver with potential and $i$ a vertex of $Q$. Assume that $Q$ has no loops nor $2$-cycles. 
There is a triangle equivalence from $\mc_{(Q,W)}$ to $\mc_{\mu_i(Q,W)}$ which sends $\mu_i(\Gamma(Q,W))$ to $\Gamma(\mu_i(Q,W))$. Consequently, $\End_{\mc_{(Q,W)}}(\mu_i(\Gamma(Q,W)))\cong J(\mu_i(Q,W))$.
\end{lemma}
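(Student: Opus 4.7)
The plan is to derive the statement as essentially a direct invocation of Keller--Yang's Theorem 3.2 together with Amiot's Theorem (Lemma~\ref{Hom-finite}). The setup is already prepared: since $Q$ has no loops nor $2$-cycles, conditions (c1) and (c2) for QP-mutation at $i$ are satisfied. Replacing $W$ by a cyclically equivalent potential if necessary (which by Lemma~\ref{l:KY1} does not change $\Gamma(Q,W)$ up to isomorphism), I may also assume condition (c3), so $\mu_i(Q,W)$ is a well-defined reduced QP.

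Next, I would apply \cite[Thm 3.2]{KY}, which provides a triangle equivalence
\[
\Phi \colon \per\Gamma(Q,W) \xrightarrow{\ \sim\ } \per\Gamma(\mu_i(Q,W))
\]
sending the mutated cluster-tilting object $\mu_i(\Gamma(Q,W))$ (computed inside $\per\Gamma(Q,W)$) to the free dg module $\Gamma(\mu_i(Q,W))$, and restricting to an equivalence between the finite-dimensional derived categories $\der^b(\Gamma(Q,W))$ and $\der^b(\Gamma(\mu_i(Q,W)))$. Passing to Verdier quotients, $\Phi$ descends to a triangle equivalence $\overline{\Phi}\colon \mc_{(Q,W)} \xrightarrow{\sim} \mc_{\mu_i(Q,W)}$ with the required behaviour on objects.

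For the final assertion, note that $\End_{\mc_{(Q,W)}}(\mu_i(\Gamma(Q,W)))$ is finite-dimensional, since $\mc_{(Q,W)}$ is $\Hom$-finite by Lemma~\ref{Hom-finite}; transported by $\overline{\Phi}$ this gives that $\End_{\mc_{\mu_i(Q,W)}}(\Gamma(\mu_i(Q,W)))$ is finite-dimensional, hence $\mu_i(Q,W)$ is Jacobi-finite and Lemma~\ref{Hom-finite} applies to it. Combining the isomorphism
\[
\End_{\mc_{(Q,W)}}(\mu_i(\Gamma(Q,W))) \cong \End_{\mc_{\mu_i(Q,W)}}(\Gamma(\mu_i(Q,W)))
\]
induced by $\overline{\Phi}$ with Amiot's description of the right-hand side as $J(\mu_i(Q,W))$ yields the claimed isomorphism.

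The only non-trivial step is of course the Keller--Yang equivalence itself, which is being cited as a black box; once it is invoked, the remaining argument is essentially bookkeeping around Verdier quotients and Amiot's theorem, with no real obstacle.
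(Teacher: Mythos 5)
Your proposal is correct and follows exactly the route the paper intends: the paper offers no written proof, stating only that the lemma is ``a direct consequence of \cite[Thm 3.2]{KY} and Lemma~\ref{Hom-finite}'', and your argument is precisely the expansion of that citation (Keller--Yang's equivalence of perfect derived categories restricting to the bounded pieces, descent to the Verdier quotients, and Amiot's identification of the endomorphism algebra). The extra care you take with condition (c3) via cyclic equivalence and with deducing Jacobi-finiteness of $\mu_i(Q,W)$ from $\Hom$-finiteness is exactly the bookkeeping the paper leaves implicit.
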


\subsection{Generalized cluster categry $\mc_A$}\label{s:gen-cluster-category}
Let $A$ be a finite dimensional $K$-algebra with finite global dimension. Denote by $\der^b(A)$ the bounded derived category of finitely generated right $A$-modules and $\nu_A$ a Serre functor of $\der^b(A)$.
Let $B$ be the dg algebra $A\oplus \D A[-3]$ with trivial differential and  $p:B\ra A$ the canonical projection. 
The projection $p$ induces a triangle functor \[p_*:\md^b(A)\ra \md^b (B).\]
 Let $\langle A\rangle_B$ be the thick subcategory of $\md^b(B)$ generated by the image of $p_*$. It is known that $\per B\subset \langle A\rangle_B$ and we can form the Verdier quotient\[\mc_A:=\langle A\rangle_B/\per B.\]
  The category $\mc_A$ is called the {\it generalized cluster category} of $A$. By \cite[Thm 7.1]{K05}, the triangulated hull of the orbit category $\der^b(A)/\nu_A[-2]$ is  $\mc_A$. If $A=KQ$ for a finite acyclic quiver $Q$, then $\mc_{KQ}=\der^b(A)/\nu_A[-2]$ by \cite[Thm 4]{K05}. In this case, $\mc_{KQ}$ is the {\it cluster category} of $KQ$ introduced by \cite{BMRRT}. In general, $\mc_A$ has infinite dimensional $\Hom$-space.

\begin{thm}\cite[Thm 4.10]{A09}
Let $A$  be a finite-dimensional $K$-algebra of  global dimension $\leq 2$.  If the functor $\Tor^A_2 (?, \D A)$ is nilpotent,  then $\mc_A$ is a $\Hom$-finite $2$-CY triangulated category and the object $A$ is a cluster tilting object in $\mc_A$.
\end{thm}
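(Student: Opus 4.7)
The plan is to follow the standard approach through the dg algebra $B=A\oplus\D A[-3]$, exploiting its bimodule $3$-Calabi--Yau property. First I would verify that $B$ is bimodule $3$-CY, in the sense that there is a quasi-isomorphism of $B$-bimodules
\[
\RHom_{B\otimes_K B\op}(B,B\otimes_K B\op)\simeq B[-3].
\]
This is a direct computation for a trivial extension by a shifted bimodule and uses only that $\D A$ plays the role of the inverse dualizing complex of $A$, which is available because $A$ has finite global dimension.

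Next, I would set up the natural t-structure on $\per B$ coming from the projection $p:B\to A$: its heart is equivalent to $\mod A$ via $p_*$, and its aisle is generated by the free module $B$. Extending this t-structure to $\langle A\rangle_B$, I would identify a \emph{fundamental domain} $\mathcal{F}\subset\langle A\rangle_B$ whose objects have cohomology (with respect to this t-structure) concentrated in degrees $-1$ and $0$ and lying in $\mod A$. A standard argument, in the spirit of Keller--Reiten, then shows that the quotient functor $\pi:\langle A\rangle_B\to\mc_A$ induces a bijection on isomorphism classes of objects of $\mathcal{F}$, so that all morphism spaces in $\mc_A$ can be computed inside $\mathcal{F}$.

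The main calculation is to express, for $X,Y\in\mathcal{F}$, the space $\Hom_{\mc_A}(X,Y[1])$ as an extension of an $\Ext^1_A$-term by a dual of a $\Tor^A_2(?,\D A)$-term. The nilpotency hypothesis then forces a stable filtration on these Hom spaces, yielding $\Hom$-finiteness of $\mc_A$. The $2$-Calabi--Yau property follows formally from the bimodule $3$-CY property of $B$: the Serre functor of $\per B$ is the shift $[3]$, and passing to the Verdier quotient by $\per B$ absorbs one shift, producing a Serre functor $[2]$ on $\mc_A$, in the style of Keller--Van den Bergh.

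Finally, to verify that $A$ is cluster-tilting, I would compute $\Hom_{\mc_A}(A,A[1])=0$ using the formula above: both contributions vanish, since $\Ext^1_A(A,A)=0$ ($A$ being projective over itself) and $\Tor^A_2(A,\D A)=0$ ($A$ being flat over itself). One then checks that any $X\in\mathcal{F}$ with $\Hom_{\mc_A}(A,X[1])=0$ must have cohomology forced into $\add A$. The principal obstacle in this whole program is the careful construction of the t-structure on $\per B$, the verification that $\mathcal{F}$ is genuinely a fundamental domain for the quotient $\mc_A$, and the bookkeeping required to let the bimodule $3$-CY quasi-isomorphism interact correctly with the truncation functors; nilpotency of $\Tor^A_2(?,\D A)$ enters precisely at the point where one needs a descending filtration on $\Hom_{\mc_A}$-spaces to terminate.
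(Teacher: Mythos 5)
The paper does not prove this statement; it is imported verbatim from \cite{A09}, so the only meaningful comparison is with Amiot's original argument, and your outline does reproduce its architecture: a canonical t-structure with heart $\mod A$, a fundamental domain $\mathcal F$ on which the quotient functor is bijective on objects, a short exact sequence squeezing $\Hom_{\mc_A}(X,Y[1])$ between $\Ext^1_A(X,Y)$ and $\D\Ext^1_A(Y,X)$, nilpotency of $\Tor^A_2(-,\D A)$ to make the resulting filtrations terminate (whence $\Hom$-finiteness), and the vanishing of both ends for $X=Y=A$ to get cluster-tilting. One technical point needs repair: the trivial extension $B=A\oplus \D A[-3]$ is proper but \emph{not} homologically smooth, so it is not bimodule $3$-CY in the smooth form $\RHom_{B\otimes_K B\op}(B,B\otimes_K B\op)\simeq B[-3]$ that you assert; what is true, and what you actually use, is the proper form $\D B\cong B[3]$ in the derived category of $B$-bimodules, which makes $[3]$ a Serre functor for $\per B$ relative to $\langle A\rangle_B$. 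Amiot's own route to the $2$-CY property instead passes to the $3$-Calabi--Yau completion $\Pi_3(A)$, which \emph{is} smooth and bimodule $3$-CY, identifies $\mc_A$ with $\per\Pi_3(A)/\md^b(\Pi_3(A))$ (note that the roles of the ``big'' and ``small'' subcategories are exchanged relative to the $B$-picture), and then applies her general Theorem 2.1; your phrase ``the quotient absorbs one shift'' is precisely the non-formal step that this machinery, or the Van den Bergh appendix to \cite{KR}, is needed to justify. With the duality statement corrected (or the detour through $\Pi_3(A)$ made explicit) and the fundamental-domain bookkeeping you already flag carried out, your proposal is a faithful reconstruction of the cited proof rather than a genuinely different one.
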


\subsection{Quiver with potential for algebra of global dimension $\leq 2$}\label{s:keller}

Let $A=KQ_A/I$ be a finite dimensional algebra of global dimension $\leq 2$, where $I$ is  an admissible ideal of the path algebra $KQ_A$.  Keller~\cite{K11} introduced a quiver with potential $(\tilde{Q}_A,W_A)$ associated to $A$.
Fix a set of representatives of minimal relations of $I$. For each such representative $r$ which starts at vertex $i$ and ends at vertex $j$, let $\rho_r$ be a new arrow from $j$ to $i$. The quiver $\tilde{Q}_A$ is obtained from $Q$ by adding all the arrows $\rho_r$.  The potential $W_A$ on $\tilde{Q}_A$ is given by
$W_A = \sum_{r} r\rho_r$, where the sum ranges over  the set of representatives.

\begin{lemma}\cite[Thm 6.12]{K11}\label{l:K11}
The generalized cluster category $\mc_A$ is triangle equivalent to the generalized cluster category $\mc_{(\tilde{Q}_A,W_A)}$. 
Moreover, the endomorphism algebra $\End_{\mc_A} (A)$ is isomorphic to the  Jacobian algebra $J(\tilde{Q}_A,W_A)$.
\end{lemma}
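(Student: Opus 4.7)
The plan is to factor the equivalence through Keller's \emph{bimodule $3$-Calabi--Yau completion}. Set $\Theta_A := \RHom_{A^e}(A, A \otimes A)$ and form the derived tensor dg algebra
\[
\Pi_3(A) \;:=\; T_A^{L}(\Theta_A[2]).
\]
When $A$ has global dimension at most $2$, a general result of Keller shows that $\Pi_3(A)$ is a homologically smooth bimodule $3$-CY dg algebra, concentrated in non-positive degrees, with $H^0(\Pi_3(A)) \cong A$. The first step of the proof is to establish the canonical triangle equivalence
\[
\mc_A \;\simeq\; \per(\Pi_3(A))/\der^b(\Pi_3(A)).
\]
Both sides arise as the triangulated hull of the orbit category $\der^b(A)/\nu_A[-2]$ recalled in Section~\ref{s:gen-cluster-category}; under this identification the distinguished cluster-tilting object $A \in \mc_A$ corresponds to the image of the free dg module $\Pi_3(A)$.

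The bulk of the work is then to produce a quasi-isomorphism of dg algebras
\[
\Pi_3(A) \;\simeq\; \Gamma(\tilde Q_A, W_A).
\]
Starting from a minimal projective bimodule resolution
\[
0 \to P^{-2} \to P^{-1} \to P^0 \to A \to 0
\]
whose terms are indexed respectively by a chosen set of representatives of minimal relations, by the arrows of $Q_A$, and by the vertices of $Q_A$, one recognises that dualizing and shifting $\Theta_A[2]$ into non-positive degrees contributes precisely the generators of the Ginzburg dg algebra: the new arrows $\rho_r$ in degree $0$, one per minimal relation $r$, and loops $t_i$ in degree $-2$, one per vertex. Transporting the natural bar-differential of $\Pi_3(A)$ through this identification forces the differential of $\rho_r$ to be the relation $r$ itself, and the differential of $t_i$ to be the standard Ginzburg commutator; both outputs are recovered exactly when the potential equals $W_A = \sum_r r\rho_r$.

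Combining the two steps yields the triangle equivalence $\mc_A \simeq \mc_{(\tilde Q_A, W_A)}$, and it sends the cluster-tilting object $A$ to the image of $\Gamma(\tilde Q_A, W_A)$, so Lemma~\ref{Hom-finite} identifies $\End_{\mc_A}(A)$ with $H^0(\Gamma(\tilde Q_A, W_A)) = J(\tilde Q_A, W_A)$. The main obstacle lies in the second step: verifying rigorously that the combinatorially defined Ginzburg differential matches the internal bar-differential on the bimodule $3$-CY completion. Carrying this out requires careful bookkeeping of signs, control of the $A_\infty$-structure on $\Ext^*_A(A,A)$ attached to a chosen minimal resolution, and the observation that a minimal set of relations is naturally dual to the generators $\rho_r$ via $\Ext^2_A(\D A, A)$, so that no further corrections to the naive potential $\sum_r r\rho_r$ are needed.
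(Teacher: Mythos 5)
This statement is imported verbatim from \cite[Thm 6.12]{K11} and the paper offers no proof of its own beyond the citation; your outline --- passing through the $3$-Calabi--Yau completion $\Pi_3(A)=T_A^{L}(\Theta_A[2])$, identifying $\mc_A$ with $\per(\Pi_3(A))/\der^b(\Pi_3(A))$ as the triangulated hull of $\der^b(A)/\nu_A[-2]$, and matching $\Pi_3(A)$ with $\Gamma(\tilde Q_A,W_A)$ via the minimal projective bimodule resolution indexed by vertices, arrows and minimal relations --- is precisely Keller's own argument for that theorem. So the approach coincides with the cited source; the only caveat is that the step you flag as the ``main obstacle'' (checking that the Ginzburg differential agrees with the internal differential of the completion) is the technical heart of Keller's proof and is left as an acknowledged gap rather than carried out.
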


\section{Cluster categories of weighted projective lines}\label{s:weighted-projective-line}
\subsection{Weighted projective lines}
Fix a positive integer $t$.
Let $\X=\X(\mathbf{ p},\boldsymbol{\lambda})$ be a weighted projective line attached to a weight sequence  $\mathbf{ p}=(p_1,\dots,p_t)$
 of integers $p_i\ge 2$ and 
a  parameter sequence $\boldsymbol{\lambda}=(\lambda_1,\dots,\lambda_t)$  of pairwise distinct elements of $\P_1(K)$. Without loss of generality, we may assume that $\lambda_1=[1:0], \lambda_2=[0:1], \lambda_3=[1:1]$.
 Denote by $\coh \X$ the category of coherent sheaves over $\X$, which is a $\Hom$-finite hereditary abelian category with tilting objects.  We refer to \cite{GL} for basic properties of weighted projective lines.
 
Let $\mathbb{L}$ be the rank one  abelian group generated by $\vec{x}_1,\ldots, \vec{x}_t$ with the relations
\[p_1\vec{x}_1=p_2\vec{x}_2=\cdots=p_t\vec{x}_t=:\vec{c},
\]
where  $\vec{c}$  is called the {\it canonical element} of $\mathbb{L}$.
For each coherent sheaf $E$ over $\X$ and $\vec{x}\in \mathbb{L}$, denote by $E(\vec{x})$ the  grading shift of $E$ with respect to $\vec{x}$. Denote by $\mathcal{O}$ the structure  sheaf of $\X$. It is known that each line bundle is given by the grading shift $\mathcal{O}(\vec{x})$ for a unique element $\vec{x}\in \mathbb{L}$. 

The category $\coh\X$ has $t$ exceptional tubes consisting of sheaves of finite length.
In the $i$-th exceptional tube of rank $p_i$,  there is a unique simple object $S_i$ such that $\Hom_{\coh\X}(\co,S_i)\neq 0$.
Note that $S_i$ is also the unique simple object in the $i$-th exceptional tube of rank $p_i$ satisfying $\Hom_{\coh\X}(\co(\vec{c}),S_i)\neq 0$.
For a positive integer $j$, denote by $S_i^{[j]}$ the unique indecomposable object lying in the same tube with $S_i$ which  has length $j$ and top $S_i$.
The object  \[T_{sq}=\co\oplus\co(\vec{c})\oplus(\bigoplus_{i=1}^t\bigoplus_{j=1}^{p_i-1} S_{i}^{[j]})\] is a basic tilting object in $\coh\X$, which is called the {\it squid tilting object} (cf. \cite[Section 8]{BKL}).

\subsection{Classifications of weighted projective lines} \label{ss:classification}

Let $p$ be the least common multiple of $p_1,\ldots, p_t$. The genus $g_{\X}$ of a weighted projective line $\X$ is defined by \[g_{\X}=1+\frac{1}{2}((t-2)p-\sum_{i=1}^tp/p_i).\]
A weighted projective line of genus $g_\X<1$($g_{\X}=1$, resp. $g_{\X}>1$) is called of {\it domestic} ({\it tubular}, resp. {\it wild}) type. It is known that a weighted projective line $\X$ is derived equivalent to a finite dimensional hereditary $K$-algebra if and only if $\X$ is of domestic type.

The domestic weight types are, up to permutation, $(q)$ with $q\ge 1$, $(q_1,q_2)$ with $q_1,q_2\ge 2$, $(2,2,n)$ with $n\ge 2$, $(2,3,3),(2,3,4),(2,3,5)$,  whereas the tubular weight types are, up to permutation, $(2,2,2,2),(3,3,3),(2,4,4)$ and $(2,3,6)$.

\subsection{Cluster category $\mc_\X$}
Let $\der^b(\coh\X)$ be the bounded derived category of $\coh\X$ with suspension functor $[1]$. Let $\tau:\der^b(\coh\X)\to \der^b(\coh\X)$ be the Auslander-Reiten translation functor.
The {\it cluster category of $\X$} is defined as the orbit category \[\mc_\X:=\md^b(\coh\X)/\tauni[1].\]  It has been proved by Keller \cite{K05} that $\mc_\X$ admits a canonical triangle structure such that the projection $\pi: \der^b(\coh\X)\to \mc_{\X}$ is a triangle functor. Moreover, $\mc_\X$ is a $\Hom$-finite $2$-CY triangulated category with cluster-tilting objects.
By \cite{BKL}, the cluster-tilting objects in $\mc_\X$ are precisely the tilting objects in $\coh\X$ and $\mc_\X$ has a cluster structure in the sense of \cite{BIRSc}. In particular, for any basic cluster-tilting object $T\in\mc_\X$, the quiver $Q_T$ has no loops nor $2$-cycles and the mutation of cluster-tilting objects is compatible with the quiver mutation, \ie $\mu_i(Q_T)=Q_{\mu_i(T)}$ for each vertex $i$ of $Q_T$ (cf. \cite[Thm 3.1]{BKL}). 

The connectedness of cluster-tilting graph $\mathcal{G}_{ct}(\mc_\X)$ has been established by \cite{BMRRT} for domestic type, by \cite[Thm 8.8]{BKL} for tubular type and by \cite[Thm 1.2]{FG} in full of generality. For a $2$-CY triangulated category which shares a $2$-CY tilted algebra with $\mc_\X$, we have

\begin{lem}\label{connectedness of mc}
Let $\mc$ be a $\Hom$-finite 2-CY triangulated category over $K$. If there is a cluster-tilting object $T$ in $\mc$ such that $\End_{\mc}(T)$ is isomorphic to a $2$-CY tilted algebra for some cluster-tilting object of $\mc_\X$,  then the cluster-tilting graph $\mathcal{G}_{ct}(\mc)$ is connected.
\end{lem}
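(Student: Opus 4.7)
The plan is to transfer the known connectedness of $\mathcal{G}_{ct}(\mc_\X)$ to $\mathcal{G}_{ct}(\mc)$ via the shared $2$-CY tilted algebra. First I would use the algebra isomorphism $\End_\mc(T)\cong\End_{\mc_\X}(T')$ to identify Gabriel quivers $Q_T=Q_{T'}$; since $\mc_\X$ admits a cluster structure, this common quiver has no loops and no $2$-cycles. The general theory of mutation of cluster-tilting objects in a Hom-finite $2$-CY triangulated category (Buan-Iyama-Reiten-Scott, Palu) then ensures that, as long as the Gabriel quiver of a cluster-tilting object has no loops and no $2$-cycles (a property preserved under mutation), mutation at any vertex agrees with Fomin-Zelevinsky quiver mutation, giving $Q_{\mu_k(\tilde T)}=\mu_k(Q_{\tilde T})$ for every $\tilde T\in\mc$ obtained from $T$ by iterated mutations.

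Next, I would run parallel mutation sequences starting from $T\in\mc$ and $T'\in\mc_\X$. An induction on the length of a sequence $\mathbf{i}=(i_1,\ldots,i_r)$ yields $Q_{\mu_{\mathbf{i}}(T)}=Q_{\mu_{\mathbf{i}}(T')}$ at every intermediate step. Combining this with the connectedness of $\mathcal{G}_{ct}(\mc_\X)$ (by \cite{BMRRT} in the domestic case, \cite{BKL} in the tubular case, and \cite{FG} in general), every cluster-tilting object of $\mc_\X$ is of the form $\mu_{\mathbf{i}}(T')$, and hence the connected component $\Sigma\subseteq\mathcal{G}_{ct}(\mc)$ containing $T$ produces cluster-tilting objects of $\mc$ realising every Gabriel quiver in the mutation class of $Q_T$.

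The hard part will be the final step: showing $\Sigma=\mathcal{G}_{ct}(\mc)$, i.e.\ that every cluster-tilting object $\tilde T\in\mc$ lies in $\Sigma$. I would first argue $Q_{\tilde T}$ has no loops nor $2$-cycles by applying the $2$-CY mutation theory locally around $\tilde T$, so that $Q_{\tilde T}$ lies in some mutation class; the delicate point is to show this class coincides with that of $Q_T$, and then to match $\tilde T$ with a cluster-tilting object of $\Sigma$ sharing its Gabriel quiver via a rigidity-type argument that leverages the structure inherited from $\mc_\X$ through the isomorphism of $2$-CY tilted algebras. This last identification is the main obstacle, since in a general Hom-finite $2$-CY category a cluster-tilting object is not determined by its Gabriel quiver.
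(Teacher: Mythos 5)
Your proposal stalls exactly where you say it does, and that final step is not a technicality but the whole content of the lemma: nothing in your quiver-tracking argument rules out a cluster-tilting object of $\mc$ lying outside the component $\Sigma$ of $T$. Running parallel mutation sequences and matching Gabriel quivers only describes $\Sigma$; it gives no handle on the set of \emph{all} cluster-tilting objects of $\mc$, and, as you note yourself, a cluster-tilting object in a general $\Hom$-finite $2$-CY category is not determined by its quiver, so there is no ``rigidity-type argument'' available along these lines. (There is also a smaller issue earlier on: to know that mutation in $\mc$ agrees with Fomin--Zelevinsky quiver mutation you must first know that the quivers of \emph{all} iterated mutations of $T$ in $\mc$ have no loops nor $2$-cycles, which is not automatic from the hypothesis and would itself need an argument.)

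The missing idea, and the one the paper uses, is the Adachi--Iyama--Reiten correspondence: writing $\Lambda=\End_\mc(T)\cong\End_{\mc_\X}(M)$, the functor $\Hom_\mc(T,-)$ induces a bijection from the set of \emph{all} basic cluster-tilting objects of $\mc$ onto the set of \emph{all} basic support $\tau$-tilting $\Lambda$-modules, compatible with mutation, and $\Hom_{\mc_\X}(M,-)$ does the same for $\mc_\X$. This identifies both cluster-tilting graphs $\mathcal{G}_{ct}(\mc)$ and $\mathcal{G}_{ct}(\mc_\X)$ with the support $\tau$-tilting exchange graph of $\Lambda$, an object depending only on the algebra; hence $\mathcal{G}_{ct}(\mc)\cong\mathcal{G}_{ct}(\mc_\X)$, and connectedness follows from the known connectedness of $\mathcal{G}_{ct}(\mc_\X)$. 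The crucial feature is that this bijection is global from the outset, so the reachability question you were unable to settle never arises. You should replace your parallel-mutation scheme with this correspondence.
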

\begin{proof}
Let $M$ be a cluster-tilting object of $\mc_\X$ such that $\End_{\mc_\X}(M)\cong \End_\mc(T)=:\Lambda$. According to \cite[Thm 4.7]{AIR}, $\Hom_{\mc}(T,-)$ induces  a bijection  between the set of basic cluster-tilting objects in $\mc$ and the set of basic support $\tau$-tilting $\Lambda$-modules, while
 $\Hom_{\mc_\X}(M,-)$ induces  a bijection  between the set of basic cluster-tilting objects in $\mc_\X$ and the set of basic support $\tau$-tilting $\Lambda$-modules.  Moreover, the bijections are compatible with mutations. Consequently, $\mathcal{G}_{ct}(\mc)\cong \mathcal{G}_{ct}(\mc_\X)$. We conclude that $\mathcal{G}_{ct}(\mc)$ is connected by \cite[Thm 1.2]{FG}.
\end{proof}

\subsection{Cluster category $\mc_\X$ as generalized cluster category}\label{s:gen-cluster-cat-tilting-quiver}

Let $T$ be a basic tilting object of $\coh\X$ and $A=\End_{\coh\X}(T)$ the endomorphism algebra. It is known that $A$ is a finite dimensional algebra of global dimension $\leq 2$ and $\der^b(\mod A)\cong \der^b(\coh\X)$. According to \cite[Thm 7.1]{K05}, we know that $\mc_A\cong\mc_\X$ and $\End_{\mc_\X}(T)\cong \End_{\mc_{A}} (A)$.  Let $(\tilde{Q}_A,W_A)$ be the quiver with potential associated to $A$ via Keller's construction (cf. Section~\ref{s:keller}). By Lemma~\ref{l:K11}, we obtain
\[\mc_{\X}\cong \mc_A\cong \mc_{(\tilde{Q}_A,W_A)}~\text{and}~\End_{\mc_\X}(T)\cong \End_{\mc_{A}} (A)\cong J(\tilde{Q}_A,W_A).
\]
We remark that the equivalence from $\mc_{\X}\xrightarrow{\sim} \mc_{(\tilde{Q}_A,W_A)}$ sends $T$ to $\Gamma(\tilde{Q}_A, W_A)$.
\begin{lem}\label{l:non-deg}
The potential $W_A$ is non-degenerate.
\end{lem}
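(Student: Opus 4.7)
The plan is to prove non-degeneracy by induction on the length of a mutation sequence $i_1,\ldots,i_t$, showing that at each stage the QP $(Q^{(s)},W^{(s)}):=\mu_{i_s}\cdots\mu_{i_1}(\tilde{Q}_A,W_A)$ is $2$-acyclic (and Jacobi-finite, so that the induction may continue). The key idea is to transport iterated QP-mutations of $(\tilde{Q}_A,W_A)$ into iterated mutations of cluster-tilting objects of $\mc_\X$ via repeated application of Lemma~\ref{l:KY2}, and then invoke the cluster-structure property of $\mc_\X$ due to \cite{BKL}, which forbids loops and $2$-cycles in the quiver of any cluster-tilting object.

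For the base case, Lemma~\ref{l:K11} identifies $J(\tilde{Q}_A,W_A)$ with $\End_{\mc_\X}(T)$, which is finite dimensional, so $(\tilde{Q}_A,W_A)$ is Jacobi-finite; since it is reduced by Keller's construction (each cyclic derivative of $\sum_r r\rho_r$ lies in $\mathfrak{m}^2$), the statement recalled after the definition of reduced QPs shows that $\tilde{Q}_A$ is the Gabriel quiver of $\End_{\mc_\X}(T)$, i.e.\ $\tilde{Q}_A=Q_T$, and $Q_T$ has no loops nor $2$-cycles by \cite{BKL}. Inductively, suppose $(Q^{(s)},W^{(s)})$ is $2$-acyclic and Jacobi-finite, and that under the composed triangle equivalence $\mc_\X\cong\mc_{(Q^{(s)},W^{(s)})}$ (obtained from $\mc_\X\cong\mc_{(\tilde{Q}_A,W_A)}$ and $s$ applications of Lemma~\ref{l:KY2}) the object $T^{(s)}:=\mu_{i_s}\cdots\mu_{i_1}(T)$ corresponds to $\Gamma(Q^{(s)},W^{(s)})$. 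Applying Lemma~\ref{l:KY2} one more time at the vertex $i_{s+1}$ yields a triangle equivalence $\mc_{(Q^{(s)},W^{(s)})}\cong\mc_{(Q^{(s+1)},W^{(s+1)})}$ sending $\mu_{i_{s+1}}(\Gamma(Q^{(s)},W^{(s)}))$ to $\Gamma(Q^{(s+1)},W^{(s+1)})$, and hence
\[
J(Q^{(s+1)},W^{(s+1)})\;\cong\;\End_{\mc_\X}(T^{(s+1)}),\qquad T^{(s+1)}:=\mu_{i_{s+1}}(T^{(s)}).
\]
The right-hand side is finite dimensional, so $(Q^{(s+1)},W^{(s+1)})$ is Jacobi-finite; being reduced by the construction of QP-mutation, its underlying quiver $Q^{(s+1)}$ is the Gabriel quiver of its Jacobian algebra, which by cluster-structure compatibility equals $Q_{T^{(s+1)}}=\mu_{i_{s+1}}(Q_{T^{(s)}})=\mu_{i_{s+1}}(Q^{(s)})$. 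Since $Q_{T^{(s+1)}}$ has no loops nor $2$-cycles by \cite{BKL}, the induction closes.

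The main point to keep track of is the threefold compatibility — between QP-mutation of $(\tilde{Q}_A,W_A)$, mutation of the canonical cluster-tilting object $\Gamma$ in the associated generalized cluster category, and mutation of $T$ in $\mc_\X$ — across the iterated triangle equivalences furnished by Lemma~\ref{l:KY2}, together with the propagation of Jacobi-finiteness at each step so that Lemma~\ref{l:KY2} remains applicable. Once this bookkeeping is in place, the appearance of a loop or a $2$-cycle in any iterated QP-mutation of $(\tilde{Q}_A,W_A)$ would contradict the cluster-structure property of $\mc_\X$, so $W_A$ must be non-degenerate.
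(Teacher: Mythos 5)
Your proof is correct and follows essentially the same route as the paper's: identify $\tilde{Q}_A$ with $Q_T$ via Jacobi-finiteness and reducedness, use Lemma~\ref{l:KY2} to identify $J(\mu_i(Q_T,W_A))$ with $\End_{\mc_\X}(\mu_i(T))$, and invoke \cite[Thm 3.1]{BKL} to exclude loops and $2$-cycles at every stage. The paper phrases the iteration as ``continuing this process'' where you spell out the induction and the bookkeeping of Jacobi-finiteness explicitly, but the argument is the same.
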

\begin{proof}
Recall that $Q_T$ is the Gabriel quiver of $\End_{\mc_\X}(T)$.
Since $\End_{\mc_\X}(T)$ is finite dimensional, it follows that $(\tilde{Q}_A, W_A)$ is Jacobi-finite. Hence we may identify $\tilde{Q}_A$ with $Q_T$. By \cite[Thm 3.1]{BKL}, $Q_T$ has no loops nor $2$-cycles.  For each vertex $i$ of $Q_T$, by Lemma~\ref{l:KY2}, we have 
\[J(\mu_i(Q_T,W_A))\cong\End_{\mc_{(\tilde{Q}, W_A)}}(\mu_i(\Gamma(\tilde{Q}_A, W_A)))\cong \End_{\mc_\X}(\mu_i(T)).\]
In particular , $\mu_i(Q_T,W_A)$ is Jacobi-finite and hence the quiver of $\mu_i(Q_T,W_A)$ is isomorphic to $Q_{\mu_i(T)}$. Therefore $\mu_i(Q_T,W_A)$ is $2$-acyclic. Continuing this process, we conclude that $(Q_T,W_A)$ is non-degenerate.

\end{proof}
The following is a direct consequence of Lemma~\ref{l:non-deg}.
\begin{corollary}
Every $2$-CY tilted algebra of $\mc_\X$ is a Jacobian algebra for a non-degenerate quiver with potential.
\end{corollary}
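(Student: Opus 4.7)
The corollary is an immediate consequence of Lemma~\ref{l:non-deg} together with the realization of $\mc_\X$ as a generalized cluster category recalled in Section~\ref{s:gen-cluster-cat-tilting-quiver}. The plan is simply to chain together these ingredients.

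First, given an arbitrary $2$-CY tilted algebra $B$ of $\mc_\X$, I would write $B\cong \End_{\mc_\X}(T)$ for some basic cluster-tilting object $T\in\mc_\X$. By the theorem of Barot-Kussin-Lenzing recalled in Section~\ref{s:weighted-projective-line}, cluster-tilting objects of $\mc_\X$ are precisely tilting objects in $\coh\X$, so I may regard $T$ as such. Setting $A:=\End_{\coh\X}(T)$, the algebra $A$ is then finite-dimensional of global dimension at most two, to which Keller's construction from Section~\ref{s:keller} applies.

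Next, form the associated quiver with potential $(\tilde{Q}_A,W_A)$. Combining Lemma~\ref{l:K11} with the triangle equivalence $\mc_A\cong \mc_\X$ sending $A$ to $T$ yields the chain of isomorphisms
\[
B\;\cong\;\End_{\mc_\X}(T)\;\cong\;\End_{\mc_A}(A)\;\cong\;J(\tilde{Q}_A,W_A),
\]
so $B$ is realized as the Jacobian algebra of the QP $(\tilde{Q}_A,W_A)$. Finally, Lemma~\ref{l:non-deg} tells us that $W_A$ is non-degenerate, which concludes the argument.

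There is essentially no obstacle here: the corollary is a direct rephrasing of Lemma~\ref{l:non-deg}, and the only facts to invoke are the Barot-Kussin-Lenzing identification of cluster-tilting objects of $\mc_\X$ with tilting objects of $\coh\X$ and the compatibility of Keller's construction with the equivalence $\mc_A\cong\mc_\X$, both of which have already been stated in the excerpt.
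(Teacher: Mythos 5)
Your argument is correct and is exactly the paper's intended proof: the corollary is stated there as a direct consequence of Lemma~\ref{l:non-deg}, with the identifications $\End_{\mc_\X}(T)\cong\End_{\mc_A}(A)\cong J(\tilde{Q}_A,W_A)$ already set up in Section~\ref{s:gen-cluster-cat-tilting-quiver}. Nothing further is needed.
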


\section{Cluster category $\mc_\X$ vs generalized cluster category $\mc_{(Q,W)}$}\label{s:main-result}
\subsection{Cluster category $\mc_\X$ with at most three weights}\label{ss:uniqueness}

Let $\X$ be a weighted projective line with weight sequence $(p_1,p_2, p_3)$. Recall that we have a basic tilting object $T_{sq}=\co\oplus\co(\vec{c})\oplus(\bigoplus_{i=1}^3\bigoplus_{j=1}^{p_i-1} S_{i}^{[j]})$, which induces a basic cluster-tilting object in $\mc_\X$. The Gabriel quiver $Q_{T_{sq}}$ of $\End_{\mc_\X}(T_{sq})$ is described as follows
\[
\xymatrix@R=0.3cm@C=0.4cm{
&&S_1^{[p_1-1]}\ar[r]\ar[ddl]&\cdots\ar[r]&S_1^{[1]}=S_1\\
&&S_2^{[p_2-1]}\ar[r]\ar[dl]&\cdots\ar[r]&S_2^{[1]}=S_2\\
&\co\ar@{=>}[rr]&&\co(\vec{c})\ar[dl]\ar[ul]\ar[uul]&\\
&&S_3^{[p_3-1]}\ar[r]\ar[ul]&\cdots\ar[r]&S_3^{[1]}=S_3.
}
\]

\begin{lem}\label{unique potential}
Let $\X$ be a weighted projective line with at most three weights. Let $T$ be a basic cluster-tilting object in $\mc_{\X}$ and  $Q_T$ the quiver of $\End_{\mc}T$, then there  is a unique non-degenerate potential on $Q_T$ up to right equivalence.
\end{lem}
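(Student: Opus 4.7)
The plan is to reduce to a single convenient cluster-tilting object and then propagate the uniqueness by mutation-invariance. First I would observe that if $T$ and $T' = \mu_k(T)$ are neighbours in $\mathcal{G}_{ct}(\mc_\X)$, then $Q_{T'} = \mu_k(Q_T)$ by the cluster-structure property recalled in Section~\ref{s:weighted-projective-line}; combining Lemma~\ref{right equivalence determined by mutation} and Lemma~\ref{correspondence with mutation} with the involutivity of QP-mutation (on reduced $2$-acyclic QPs up to right equivalence), QP-mutation descends to a bijection between right-equivalence classes of non-degenerate potentials on $Q_T$ and on $Q_{T'}$. Since $\mathcal{G}_{ct}(\mc_\X)$ is connected by Lemma~\ref{connectedness of mc}, it is enough to prove the statement for a single conveniently chosen $T$, and the natural choice is the squid tilting object $T_{sq}$.

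For $t \le 2$, or for $t = 3$ with $\X$ of domestic type (so $\X$ has weight type $(2,2,n)$, $(2,3,3)$, $(2,3,4)$ or $(2,3,5)$), the category $\coh\X$ is derived equivalent to $\mmod\,KQ'$ for some finite acyclic quiver $Q'$, whence $\mc_\X \cong \mc_{Q'}$ and $Q_{T_{sq}}$ lies in the mutation class of the acyclic quiver $Q'$. Lemma~\ref{l:glas} then gives the uniqueness at once. This handles everything except the tubular and wild three-weight cases.

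For $t = 3$ with $\X$ of tubular or wild type, the quiver $Q_{T_{sq}} = \tilde Q_{A_{sq}}$ is not mutation-acyclic and a direct analysis is required. It has two distinguished vertices $\co, \co(\vec c)$ joined by two arrows $x,y$, three tube arms $\gamma_i$ of length $p_i - 1$ emanating from $\co(\vec c)$, and three ``return'' arrows $\rho_i\colon S_i^{[p_i-1]} \to \co$ coming from the three squid relations via Keller's construction; every oriented cycle factors through at least one $\rho_i$. Given any non-degenerate potential $W$ on $Q_{T_{sq}}$, I would first apply Lemma~\ref{keep non-degenerate} to the three sub-squid quivers obtained by deleting two of the $\rho_j$'s to force the lowest-order contribution of $W$ at each $\rho_i$ to be of the form $(a_i x + b_i y)\,\gamma_i\,\rho_i$ with $(a_i,b_i)\ne (0,0)$, and to force the three resulting linear forms $a_i x + b_i y$ to be pairwise non-proportional. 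A single $\GL_2$-change of variables on $(x,y)$ then normalizes this leading part of $W$ to the canonical squid potential $W_{A_{sq}}$, using that triples of distinct points in $\P_1(K)$ form one $\GL_2$-orbit. The higher-order remainder $W - W_{A_{sq}}$ is then removed by an inductive right equivalence in the $\mathfrak{m}$-adic topology, at each stage absorbing the next graded piece into the Jacobian ideal of the already-normalized leading terms; convergence is ensured by the completeness of $K\langle\langle Q_{T_{sq}}\rangle\rangle$.

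The main obstacle is this last inductive absorption. One must verify that the Jacobian ideal of $W_{A_{sq}}$ contains, modulo cyclic equivalence, every cycle of length $> p_i + 1$ through $\rho_i$. The hypothesis $t \le 3$ is essential here, since a fourth tube would introduce independent cycles not controlled by the derivatives $\partial_{\rho_j} W_{A_{sq}} = \ell_j(x,y)\gamma_j$ and $\partial_x W_{A_{sq}}, \partial_y W_{A_{sq}}$. I expect the argument to proceed by a careful cycle-decomposition: any cycle through $\rho_i$ longer than $p_i + 1$ must also traverse some other $\rho_j$, and one can then rewrite it cyclically using $\partial_{\rho_j} W_{A_{sq}}$; the bookkeeping is technical but elementary, amounting to a Newton-type recursion on cycle length.
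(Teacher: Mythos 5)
Your reduction to the squid object $T_{sq}$ via connectedness of the cluster-tilting graph and mutation-invariance of right-equivalence classes, and your treatment of the domestic cases via Lemma~\ref{l:glas}, both match the paper. The gap is in the remaining case ($t=3$, tubular or wild), where you declare that ``a direct analysis is required'' and embark on a normalization-plus-absorption argument. You have missed the paper's key observation, which makes that entire computation unnecessary: in $Q_{T_{sq}}$ every oriented cycle is a $3$-cycle $\co\to\co(\vec c)\to S_i^{[p_i-1]}\to\co$, so all cycles lie in the full subquiver $Q$ on the five vertices $\co$, $\co(\vec c)$, $S_1^{[p_1-1]}$, $S_2^{[p_2-1]}$, $S_3^{[p_3-1]}$ (the arms carry no cycles at all). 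Although $Q_{T_{sq}}$ itself is not mutation-acyclic in the tubular/wild case, this five-vertex quiver \emph{is}: the paper checks that $\mu_2\mu_3\mu_4\mu_5(Q)$ is acyclic. Hence $Q$ has a unique non-degenerate potential by Lemma~\ref{l:glas}; any non-degenerate potential on $Q_{T_{sq}}$ is supported on $Q$, restricts non-degenerately by Lemma~\ref{keep non-degenerate}, and the resulting right equivalence lifts back to $Q_{T_{sq}}$ by extending by the identity on the arm arrows. Note in particular that your picture of the cycle structure is wrong: the return arrows $\rho_i$ start at the \emph{first} arm vertex $S_i^{[p_i-1]}$, not the last, so there are no cycles of length $p_i+1$ winding around the arms.

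Even taken on its own terms, your substitute argument is not a proof. The step that forces the leading part of $W$ into the normal form $(a_ix+b_iy)\gamma_i\rho_i$ with pairwise non-proportional linear forms invokes Lemma~\ref{keep non-degenerate} for ``deleting two of the $\rho_j$'s'', but that lemma only allows restriction to a subset of \emph{vertices}, not deletion of individual arrows, and it is not explained what the classification of non-degenerate potentials on the one-armed restriction is. More seriously, the ``inductive absorption'' of the higher-order remainder --- the claim that every longer cycle is cyclically equivalent to an element of the Jacobian ideal of the normalized leading term --- is exactly the content of a rigidity computation in the style of Gei\ss--Labardini-Fragoso--Schr\"oer, and you only assert that you ``expect'' it to work. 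As written, the crucial non-domestic three-weight case is therefore not established.
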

\begin{proof}
According to Lemma~\ref{l:non-deg}, it remains to prove the uniqueness.

If $\X$ has at most two weights, then $\X$ is of  domestic type. In this case, $\coh\X$ is derived equivalent to the path algebra $KQ$ of an acyclic quiver $Q$ of affine type.   Consequently, $Q_T$ is mutation equivalent to $Q$. By Lemma~\ref{l:glas}, we conclude that $Q_T$  has a unique non-degenerate potential up to right equivalence.

Suppose that $\X$ has weight sequence $(p_1,p_2,p_3)$.  
Let $Q$ be the quiver as follows
\[
\xymatrix@R=0.3cm@C=0.4cm{
&3\ar[ddl]&\\
&4\ar[dl]&\\
1\ar@{=>}[rr]&&2.\ar[ul]\ar[uul]\ar[dl]\\
&5\ar[ul]&
}
\]
It is straightforward to check that $\mu_2\mu_3\mu_4\mu_5(Q)$ is an acyclic quiver. Therefore $Q$ has a unique non-degenerate potential $S$ up to right equivalence.  Note that $Q$ is a full subquiver of ${Q}_{T_{sq}}$ and every cycle of $Q_{T_{sq}}$ lies in $Q$. If $W$ is a non-degenerate potential on ${Q}_{T_{sq}}$, then $W$ is a non-degenerate potential on $Q$ by Lemma~\ref{keep non-degenerate}. Hence, $(Q,W)$ is right equivalent to $(Q,S)$. Clearly, the right equivalence between $(Q,W)$ and $(Q,S)$ lifts to a right equivalence between $(Q_{T_{sq}}, W)$ and $(Q_{T_{sq}},S)$. Therefore $S$ is the unique non-degenerate potential on ${Q}_{T_{sq}}$ up to right equivalence.

By \cite[Thm 1.2]{FG}, $T$ is reachable from $T_{sq}$. Since the mutation of cluster-tilting objects in $\mc_\X$ is compatible with quiver mutation, it follows that there is a sequence of vertices $i_1,\ldots, i_k$ of $Q_T$ such that $Q_{T_{sq}}=\mu_{i_k}\cdots\mu_{i_2}\mu_{i_1}(Q_T)$. By Lemma~\ref{right equivalence determined by mutation} and Lemma~\ref{correspondence with mutation},  the uniqueness of non-degenerate potential on $Q_T$ follows from the uniqueness of non-degenerate potential on ${Q}_{T_{sq}}$.
\end{proof}

\begin{cor}\label{c:quiver-to-equivalent}
Let $\X$ be a weighted projective line with at most three weights. Let $Q$ be the Gabriel quiver of a $2$-CY tilted algebra of $\mc_\X$.  If  $W$ is a non-degenerate potential on $Q$, then $\mc_{(Q,W)}\cong \mc_\X$.
\end{cor}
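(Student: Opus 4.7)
The plan is to exploit the uniqueness of non-degenerate potentials on $Q$ given by Lemma~\ref{unique potential}. Concretely, if I can exhibit even one non-degenerate potential $W_0$ on $Q$ whose generalized cluster category is triangle equivalent to $\mc_\X$, then every non-degenerate $W$ on $Q$ is automatically right equivalent to $W_0$, and Lemma~\ref{l:KY1} yields $\mc_{(Q,W)}\cong \mc_{(Q,W_0)}\cong \mc_\X$ immediately.

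To produce such a witness $W_0$, I would begin from the squid tilting object $T_{sq}\in \coh\X$ and its endomorphism algebra $A=\End_{\coh\X}(T_{sq})$. Section~\ref{s:gen-cluster-cat-tilting-quiver} supplies a Jacobi-finite QP $(Q_{T_{sq}},W_A)$ together with a triangle equivalence $\mc_\X \xrightarrow{\sim} \mc_{(Q_{T_{sq}},W_A)}$ sending $T_{sq}$ to $\Gamma(Q_{T_{sq}},W_A)$, while Lemma~\ref{l:non-deg} ensures that $W_A$ is non-degenerate. Now let $T\in \mc_\X$ be any cluster-tilting object realising the given $2$-CY tilted algebra, so that $Q=Q_T$. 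By the connectedness of $\mathcal{G}_{ct}(\mc_\X)$ established in \cite[Thm 1.2]{FG}, there exists a mutation sequence $\mu_{i_k}\cdots\mu_{i_1}$ taking $T_{sq}$ to $T$, and compatibility of cluster-tilting mutation with quiver mutation (Section~\ref{s:gen-cluster-cat-tilting-quiver}) gives $Q=\mu_{i_k}\cdots\mu_{i_1}(Q_{T_{sq}})$. Iterated application of Lemma~\ref{l:KY2} would then transport $(Q_{T_{sq}},W_A)$ along this sequence to a QP $(Q,W_0):=\mu_{i_k}\cdots\mu_{i_1}(Q_{T_{sq}},W_A)$ satisfying $\mc_{(Q,W_0)}\cong \mc_\X$.

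The main point to justify is that each step of this iteration genuinely falls under the hypotheses of Lemma~\ref{l:KY2}, that is, each intermediate QP is Jacobi-finite with $2$-acyclic quiver. I would handle this inductively: after $j$ mutations, the image of $\Gamma$ under the running equivalence is the cluster-tilting object $\mu_{i_j}\cdots\mu_{i_1}(T_{sq})\in\mc_\X$, so its endomorphism algebra is a $2$-CY tilted algebra of $\mc_\X$, hence finite-dimensional, and its Gabriel quiver has no loops nor $2$-cycles thanks to the cluster structure on $\mc_\X$. The non-degeneracy of $W_0$ itself follows from that of $W_A$ together with the involutivity of QP-mutation (Lemma~\ref{correspondence with mutation} combined with \cite{DWZ}), since any further mutation sequence starting at $(Q,W_0)$ can be reversibly extended to one starting at $(Q_{T_{sq}},W_A)$, which remains $2$-acyclic by hypothesis. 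The only real work is this inductive bookkeeping; once it is in place, Lemma~\ref{unique potential} and Lemma~\ref{l:KY1} finish the argument in one line.
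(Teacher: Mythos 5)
Your proof is correct, and its overall skeleton --- exhibit one non-degenerate potential on $Q$ whose generalized cluster category is triangle equivalent to $\mc_\X$, then invoke Lemma~\ref{unique potential} and Lemma~\ref{l:KY1} --- is exactly the paper's. The only divergence is in how the witness potential is produced. The paper observes that, since the cluster-tilting objects of $\mc_\X$ are precisely the tilting objects of $\coh\X$, Lemma~\ref{l:non-deg} together with Section~\ref{s:gen-cluster-cat-tilting-quiver} applies \emph{directly} to a cluster-tilting object $T$ with $Q_T=Q$: Keller's construction on $A=\End_{\coh\X}(T)$ already yields a non-degenerate potential $W_T$ on $Q_T$ with $\mc_{(Q_T,W_T)}\cong\mc_\X$, so no mutation is needed. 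You instead start from the squid tilting object and transport $(Q_{T_{sq}},W_A)$ to $Q$ along a mutation sequence, which obliges you to carry out the inductive Jacobi-finiteness and $2$-acyclicity bookkeeping at each step and to check that non-degeneracy is preserved under mutation. That bookkeeping is all sound (and is essentially a rerun of the argument already contained in the proof of Lemma~\ref{l:non-deg}), but it is work the paper's one-line proof avoids; the mild compensation is that your route only ever invokes Keller's construction at the single, explicitly described object $T_{sq}$.
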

\begin{proof}
Let $T$ be a basic cluster-tilting object of $\mc_\X$ such that $Q_T=Q$. By Lemma~\ref{l:non-deg}, there is a non-degenerate potential $W_T$ on $Q_T$ such that $\mc_\X\cong \mc_{(Q_T,W_T)}$. Now the result follows from Lemma~\ref{unique potential} and Lemma~\ref{l:KY1}.
\end{proof}

Let $T$ be a basic cluster-tilting object of the cluster category $\mc_\X$ of a weighted projective line $\X$. The $2$-CY tilted algebra $\End_{\mc_\X}(T)$ is {\it determined by its quiver} if for any basic cluster-tilting object $T'$ of the cluster category $\mc_{\X'}$ of some weighted projective line $\X'$ such that $Q_T\cong Q_{T'}$, then $\End_{\mc_\X}(T)\cong \End_{\mc_{\X'}}(T')$.

\begin{cor}
Let $\X$ be a weighted projective line with at most three weights. Then
$2$-CY tilted algebras arising from $\mc_{\X}$ are uniquely determined by their quivers.
\end{cor}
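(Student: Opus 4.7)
The plan is to reduce the isomorphism of endomorphism algebras to the uniqueness of non-degenerate potentials on the common quiver, which has already been established in Lemma~\ref{unique potential}. Let $T\in \mc_\X$ and $T'\in \mc_{\X'}$ be basic cluster-tilting objects with $Q_T\cong Q_{T'}$, where $\X$ has at most three weights and $\X'$ is any weighted projective line. The goal is to show $\End_{\mc_\X}(T)\cong \End_{\mc_{\X'}}(T')$.

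First I would apply Lemma~\ref{l:non-deg} on both sides: choose a tilting object of $\coh\X$ (respectively $\coh\X'$) reachable from the squid tilting object whose image in $\mc_\X$ (respectively $\mc_{\X'}$) is $T$ (respectively $T'$), together with Keller's construction of Section~\ref{s:keller}. This yields non-degenerate potentials $W_T$ on $Q_T$ and $W_{T'}$ on $Q_{T'}$, and identifications $\End_{\mc_\X}(T)\cong J(Q_T,W_T)$ and $\End_{\mc_{\X'}}(T')\cong J(Q_{T'},W_{T'})$. Note that Lemma~\ref{l:non-deg} makes no restriction on the number of weights, so this step works for $\X'$ as well.

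Next, using the given isomorphism $Q_T\cong Q_{T'}$, transport $W_{T'}$ to a potential on $Q_T$; it remains non-degenerate because non-degeneracy is invariant under relabeling of vertices. Now both $W_T$ and (the transported) $W_{T'}$ are non-degenerate potentials on the same quiver $Q_T$, and since $\X$ has at most three weights, Lemma~\ref{unique potential} forces $(Q_T, W_T)$ and $(Q_T, W_{T'})$ to be right equivalent. Right-equivalent QPs have isomorphic Jacobian algebras (the defining algebra isomorphism sends the Jacobian ideal to the Jacobian ideal, since cyclic derivatives commute with the change of variables up to cyclic equivalence; alternatively this is packaged inside Lemma~\ref{l:KY1}). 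Combining the isomorphisms yields
\[
\End_{\mc_\X}(T)\cong J(Q_T, W_T)\cong J(Q_T, W_{T'})\cong \End_{\mc_{\X'}}(T'),
\]
which is exactly the statement that the $2$-CY tilted algebra is determined by its quiver.

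There is no genuine obstacle here: the substantive work was done in Lemma~\ref{unique potential}, and this corollary is a bookkeeping consequence. The only point worth flagging is that one need not assume $\X'$ has at most three weights; the uniqueness applied on the $\X$-side is already sufficient because the right-equivalence class of any non-degenerate potential on $Q_T$ is pinned down, regardless of the geometric origin of the competing potential $W_{T'}$.
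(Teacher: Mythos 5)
Your proof is correct and follows essentially the same route as the paper: invoke Lemma~\ref{l:non-deg} on both sides to realize the two endomorphism algebras as Jacobian algebras of non-degenerate QPs on the common quiver, apply the uniqueness of the non-degenerate potential (Lemma~\ref{unique potential}) on the side with at most three weights to get a right equivalence, and conclude via Lemma~\ref{l:KY1}. Your remark that no hypothesis on $\X'$ is needed is accurate and matches how the paper states and proves the corollary.
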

\begin{proof}
Let $T$ be a basic cluster-tilting object of $\mc_\X$ and $T'$  a cluster-tilting object of $\mc_{\X'}$ for some weighted projective line $\X'$ such that $Q_T\cong Q_{T'}$.

By Lemma ~\ref{l:non-deg}, there exist non-degenerate quivers with potentials $(Q_T, W_T)$ and $(Q_{T'},W_{T'})$ such that $\mc_{\X}\cong \mc_{(Q_T,W_T)} $ and $\mc_{\X'}\cong \mc_{(Q_{T'}, W_{T'})}$. Moreover, $\End_{\mc_\X}(T)\cong J(Q_T,W_T)$ and $\End_{\mc_{{\X'}}}(T')\cong J(Q_{T'},W_{T'})$. By Lemma~\ref{unique potential}, $(Q_T, W_T)$ and $(Q_{T'}, W_{T'})$ are right equivalent. Hence,   $\mc_{(Q_T,W_T)} \cong \mc_{(Q_{T'}, W_{T'})}$ and $J(Q_T,W_T)\cong J(Q_{T'},W_{T'})$ by Lemma~\ref{l:KY1}.

\end{proof}

\begin{theorem}\label{t:main results 2}
Let  $(Q,W)$ be a Jacobi-finite quiver with non-degenerate potential.  If there is a cluster-tilting object $T$ in $\mc_{(Q,W)}$ such that $\End_{\mc_{(Q,W)}}(T)$ is isomorphic to a  $2$-CY tilted algebra arising from some weighted projective line $\X$ with at most three weights, then
$\mc_{(Q,W)}$ is triangle equivalent to $\mc_{\X}$.
\end{theorem}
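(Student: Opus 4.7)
The plan is to reduce everything to Corollary~\ref{c:quiver-to-equivalent} by transporting the hypothesis along a mutation path that carries $T$ to the canonical cluster-tilting object of a mutated Ginzburg dg algebra. The three ingredients I need are: (i) $T$ is reachable from $\Gamma(Q,W)$ via mutations, (ii) mutations of cluster-tilting objects in $\mc_{(Q,W)}$ correspond to QP-mutations of $(Q,W)$ under a triangle equivalence, and (iii) the uniqueness of non-degenerate potentials encoded in Corollary~\ref{c:quiver-to-equivalent}.

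First I would apply Lemma~\ref{connectedness of mc}: since $\End_{\mc_{(Q,W)}}(T)$ is isomorphic to a $2$-CY tilted algebra of $\mc_\X$, the cluster-tilting graph $\mathcal{G}_{ct}(\mc_{(Q,W)})$ is connected, so there is a finite sequence $i_1,\dots,i_k$ of vertices with $T\cong \mu_{i_k}\cdots\mu_{i_1}(\Gamma(Q,W))$. Because $(Q,W)$ is non-degenerate, every intermediate quiver encountered along this sequence is $2$-acyclic, so I can iterate Lemma~\ref{l:KY2}: setting $(Q',W'):=\mu_{i_k}\cdots\mu_{i_1}(Q,W)$, there is a triangle equivalence $F:\mc_{(Q,W)}\iso \mc_{(Q',W')}$ sending $T$ to $\Gamma(Q',W')$. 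Hence
\[
J(Q',W')\cong \End_{\mc_{(Q',W')}}(\Gamma(Q',W'))\cong \End_{\mc_{(Q,W)}}(T),
\]
and since $(Q',W')$ is a reduced Jacobi-finite QP, its Gabriel quiver is $Q'$, which therefore coincides with $Q_T$.

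To finish, observe that $W'$ is non-degenerate on $Q'$, because non-degeneracy is preserved under QP-mutation (any mutation sequence on $(Q',W')$ extends to one on $(Q,W)$, whose output is $2$-acyclic by hypothesis). By hypothesis, $Q'=Q_T$ is the Gabriel quiver of a $2$-CY tilted algebra of $\mc_\X$. Corollary~\ref{c:quiver-to-equivalent} now applies verbatim and yields $\mc_{(Q',W')}\cong \mc_\X$. Composing with $F$ gives the desired triangle equivalence $\mc_{(Q,W)}\cong \mc_\X$.

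The real content of the argument is not in this final theorem itself but in the ingredients already established: Lemma~\ref{unique potential} (uniqueness of non-degenerate potentials on the quiver of a cluster-tilting object in $\mc_\X$ for $\X$ with at most three weights), which in turn rests on inspecting the squid quiver $Q_{T_{sq}}$, reducing to its maximal cyclic subquiver $Q$, and verifying that $Q$ becomes acyclic after four explicit mutations so that Lemma~\ref{l:glas} applies. Once that uniqueness is in hand, the present theorem is essentially a formal composition of the mutation-compatibility machinery; the only thing to watch is that the chain of QP-mutations remains well-defined at every step, which is guaranteed by the non-degeneracy of $(Q,W)$.
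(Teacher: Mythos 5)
Your proposal is correct and follows essentially the same route as the paper's own proof: reachability of $T$ from $\Gamma(Q,W)$ via Lemma~\ref{connectedness of mc}, transport along the mutation sequence using Lemma~\ref{l:KY2} to identify $\End_{\mc_{(Q,W)}}(T)$ with $J(Q',W')$ for $(Q',W')=\mu_{i_l}\cdots\mu_{i_1}(Q,W)$ and $Q'=Q_T$, and then Corollary~\ref{c:quiver-to-equivalent}. The only cosmetic difference is that you spell out why $W'$ remains non-degenerate and why the mutations stay well-defined, which the paper leaves implicit.
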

\begin{proof}

By Lemma~\ref{Hom-finite}, $\Gamma(Q,W)$ is a cluster-tilting object in $\mc_{(Q,W)}$. Consequently, $T$ is reachable from $\Gamma$ by Lemma~\ref{connectedness of mc}.
In particular, there exists  a sequence of vertices $i_1,\ldots, i_l$ of $Q$ such that $T=\mu_{i_l}\cdots\mu_{i_2}\mu_{i_1}(\Gamma(Q,W))$. Denote by $(Q',W')=\mu_{i_l}\cdots\mu_{i_2}\mu_{i_1}(Q,W)$, which is a non-degenerate quiver with potential. Applying Lemma ~\ref{l:KY2},  we have $\mc_{(Q',W')}\cong \mc_{(Q,W)}$ and $\End_{\mc_{(Q,W)}}(T)\cong J(Q',W')$. Note that $(Q,W)$ is Jacobi-finite, which implies that $(Q',W')$ is Jacobi-finite. Hence $Q'=Q_T$.
It follows from  Corollary~\ref{c:quiver-to-equivalent} that   $\mc_{(Q,W)}\cong \mc_{(Q',W')}\cong \mc_\X$.

\end{proof}

\subsection{Cluster category $\mc_\X$ of tubular type}

Let $\X$ be a weighted projective line with weight sequence $(2,2,2,2)$ and parameter sequence $\boldsymbol{\lambda}=(\lambda_1,\lambda_2,\lambda_3,\lambda_4)$. Recall that $\lambda_1=[1:0], \lambda_2=[0:1],\lambda_3=[1:1]$.  The point $\lambda_4$ is uniquely determined by $\lambda\in K\backslash\{0,1\}$ by setting $\lambda_4=[\lambda:1] $.
 Hence a weight projective line with weight type $(2,2,2,2)$ can by denote by $\X(2,2,2,2; \lambda)$ for  $\lambda\neq 0,1$.

Let $\X=\X(2,2,2,2;\lambda)$ and $\X'=X(2,2,2,2;\lambda')$ be weighted projective lines.
It is known that $\coh\X$ and $\coh\X'$ are equivalent abelian categories iff
$\md^b(\coh \X)$ and $\md^b(\coh \X')$ are triangle equivalent iff $\lambda'\in O(\lambda)=\{\lambda,\lambda^{-1},1-\lambda,1-\lambda^{-1},(1-\lambda)^{-1},\frac{\lambda}{1-\lambda}\}$. Therefore, $\mc_{\X}$ and $\mc_{\X'}$ are triangle
equivalent if $\lambda'\in O(\lambda)=\{\lambda,\lambda^{-1},1-\lambda,1-\lambda^{-1},(1-\lambda)^{-1},\frac{\lambda}{1-\lambda}\}$.

Let  $Q^{(2,2,2,2)}$ be the following quiver
\[
\xymatrix{
1\ar@{.}[d]&2\ar[l]_a\ar@/^/[dl]^<<<{e}&3\ar[l]_b\ar@/^/[dl]^<<<g&1\ar[l]_c\ar@/^/[dl]^<<<i\ar@{.}[d]\\
4&5\ar[l]^j\ar@/^/[ul]_>>>{d}&6\ar[l]^k\ar@/^/[ul]_>>>f&4.\ar[l]^l\ar@/^/[ul]_>>>h
}
\]
For each $\lambda\in K\backslash\{0,1\}$, set
\[W_{\lambda}=\lambda abc-dgc+dki-afi+jgh-ebh+efl-jkl.\]
Let $A=kQ_A/I_A$, where $Q_A$  is  the following quiver 
\[
\xymatrix{
1&2\ar[l]_a\ar@/^/[dl]^<<<e&3\ar[l]_b\ar@/^/[dl]^<<<g\\
4&5\ar[l]^j\ar@/^/[ul]_>>>d&6\ar[l]^k\ar@/^/[ul]_>>>f
}
\]
and $I=\langle dg-\lambda ab,dk-af,jg-eb,ef-jk\rangle,\lambda\notin\{0,1\}$.
It is straightforward to check that $A$ is a tubular algebra of type $(2,2,2,2;\lambda)$. In particular, $\der^b(\mod A)\cong \der^b(\coh\X)$ for a weighted projective line $\X$ with weight sequence $(2,2,2,2;\lambda)$. Consequently, $\mc_A\cong \mc_\X$.
Applying Keller's construction to $A$, we obtain a  quiver with potential $(\tilde{Q}_A,W_A)=(Q^{(2,2,2,2)},W_\lambda)$.
Hence,  by Lemma~\ref{l:K11}, we have $\mc_{\X}\cong\mc_A\cong\mc_{(Q^{(2,2,2,2)},W_{\lambda})}$.

\begin{lem}\cite[Prop 9.15]{GLaS}\label{potential on 2222}
\begin{itemize}
\item[(1)] Each non-degenerate potential on $Q^{(2,2,2,2)}$ is right equivalent to one of the potentials
$W_{\lambda}$ with $\lambda\notin \{0,1\}$.
\item[(2)] The Jacobian algebras $J(Q^{(2,2,2,2)},W_{\lambda})$ and $J(Q^{(2,2,2,2)},W_{\lambda'})$ are isomorphic if and only if $\lambda'\in\{\lambda,\lambda^{-1}\}$.
\end{itemize}
\end{lem}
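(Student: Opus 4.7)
The plan for both parts is a direct normalization of non-degenerate potentials on $Q^{(2,2,2,2)}$, combined with the identification of the remaining invariant with the cross-ratio parameter of the associated weighted projective line.

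For part~(1), I would first observe that $Q^{(2,2,2,2)}$ has no loops nor $2$-cycles and that, up to cyclic equivalence, its primitive cycles of minimal length are exactly the eight $3$-cycles $abc, dgc, dki, afi, jgh, ebh, efl, jkl$ occurring in $W_\lambda$. The key step is then to show that every non-degenerate potential $W$ is right equivalent to one supported on just these eight $3$-cycles. I would do this by iterating the Derksen--Weyman--Zelevinsky splitting theorem along the $\m$-adic filtration: writing $W = W^{(3)} + R$ with $R \in \m^4$, one checks that for non-degenerate $W$ the Jacobian ideal $J(W^{(3)})$ already contains every product of two composable arrows, so $R$ can be absorbed into $W^{(3)}$ degree by degree with convergence in the $\m$-adic topology. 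Next, rescaling the twelve arrows by a diagonal element of $(K^*)^{12}$ gives a linear action on the eight cycle coefficients whose image has codimension one in $(K^*)^8$; fixing the sign pattern of $W_\lambda$ leaves exactly one invariant $\lambda$. Finally, a direct computation shows that $\lambda \in \{0,1\}$ produces a degenerate QP: at $\lambda = 0$ one mutation creates a $2$-cycle, and at $\lambda = 1$ two of the $\partial_\alpha W_\lambda$ become linearly dependent, so the Gabriel quiver of $J(Q^{(2,2,2,2)}, W)$ strictly loses arrows.

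For part~(2), the implication $\lambda' \in \{\lambda, \lambda^{-1}\} \Rightarrow J(Q^{(2,2,2,2)}, W_\lambda) \cong J(Q^{(2,2,2,2)}, W_{\lambda'})$ is constructive: the involution of $Q^{(2,2,2,2)}$ swapping the top and bottom rows (with arrow correspondence $a \leftrightarrow j$, $b \leftrightarrow k$, $c \leftrightarrow l$, $e \leftrightarrow d$, $g \leftrightarrow f$, $i \leftrightarrow h$), together with a suitable rescaling, sends $W_\lambda$ to a potential cyclically equivalent to $W_{\lambda^{-1}}$, which yields the required algebra isomorphism. For the converse, I would use $J(Q^{(2,2,2,2)}, W_\lambda) \cong \End_{\coh \X(2,2,2,2;\lambda)}(T_{sq})$ together with the classification of tubular weighted projective lines recalled just above. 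An isomorphism of Jacobian algebras that fixes the Gabriel quiver $Q^{(2,2,2,2)}$ vertex-wise extends to a derived equivalence $\md^b(\coh \X(2,2,2,2;\lambda)) \simeq \md^b(\coh \X(2,2,2,2;\lambda'))$ sending $T_{sq}$ to $T_{sq}$; this forces $\lambda' \in O(\lambda)$, and among the six elements of $O(\lambda)$ only $\lambda$ and $\lambda^{-1}$ arise from automorphisms of $\P^1$ that fix the ordered triple $([1:0], [0:1], [1:1])$. The other four elements correspond to permutations of the four parameter points that relabel the vertices of the squid and hence would not yield a vertex-preserving algebra isomorphism.

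The main obstacle is the reduction step in part (1): proving rigorously that an arbitrary non-degenerate potential on $Q^{(2,2,2,2)}$ can be brought, via an $\m$-adic Cauchy sequence of algebra automorphisms, to a form involving only the eight minimal $3$-cycles. This requires verifying that the partial derivatives of the degree-$3$ part $W^{(3)}$ already span all the quadratic relations needed to invoke the splitting theorem, which is a non-trivial combinatorial check specific to the quiver $Q^{(2,2,2,2)}$ and its eight interlocking $3$-cycles covering all six vertices.
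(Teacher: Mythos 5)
The paper does not prove this lemma at all: it is quoted verbatim from Gei\ss--Labardini-Fragoso--Schr\"{o}er \cite[Prop 9.15]{GLaS}, so there is no internal argument to compare yours against. Judged on its own terms, your sketch reproduces the right overall shape (normalize to the eight $3$-cycles, rescale arrows to leave one invariant $\lambda$, exclude $\lambda\in\{0,1\}$), but the step you yourself flag as the main obstacle rests on a false claim. The cyclic derivatives of the cubic part $W^{(3)}$ do \emph{not} generate every product of two composable arrows: each $\partial_\alpha W_\lambda$ is a difference of the \emph{two} parallel length-two paths between a given pair of vertices (e.g.\ $\partial_c W_\lambda=\lambda ab-dg$), so in degree $2$ the ideal $J(W^{(3)})$ has dimension $12$ inside the $24$-dimensional space of length-two paths. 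In particular $ab\notin J(W^{(3)})+\m^3$, the potential $W_\lambda$ is not rigid, and the higher-order terms cannot be absorbed ``degree by degree'' by the mechanism you describe; this is precisely why the actual proof in \cite{GLaS} of the reduction to the cubic part is long and delicate. A second error: at $\lambda=1$ the Gabriel quiver of $J(Q^{(2,2,2,2)},W_1)$ does not lose arrows (all $\partial_\alpha W_1$ still lie in $\m^2$ since $W_1$ has no quadratic terms); the degeneracy of $W_0$ and $W_1$ only becomes visible after performing suitable mutations.

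For part (2), the ``if'' direction via the row-swapping involution is fine, but the ``only if'' direction is circular in the context of this paper. You pass from an isomorphism of the Jacobian algebras (which are $2$-CY tilted, i.e.\ cluster-tilted, algebras) to a derived equivalence $\md^b(\coh\X(2,2,2,2;\lambda))\simeq\md^b(\coh\X(2,2,2,2;\lambda'))$ ``sending $T_{sq}$ to $T_{sq}$''. Whether a $2$-CY tilted algebra determines the ambient triangulated category (let alone the hereditary category up to derived equivalence) is exactly the open question Theorem \ref{t:main results 3} addresses, and this lemma is an input to that theorem, not a consequence of it. Moreover all six values in $O(\lambda)$ give triangle equivalent cluster categories, so distinguishing $\{\lambda,\lambda^{-1}\}$ from the other four elements cannot be done at the level of derived equivalences of $\X$; it requires an argument internal to the algebra, e.g.\ reducing a vertex-fixing isomorphism to a quiver automorphism composed with a right equivalence and computing the induced action on the normalized coefficient $\lambda$, which is how \cite{GLaS} proceed. (A minor slip: $J(Q^{(2,2,2,2)},W_\lambda)$ is the endomorphism algebra of the canonical tilting bundle $T^{(2,2,2,2)}$ in $\mc_\X$, not of the squid $T_{sq}$, whose quiver for weight type $(2,2,2,2)$ is different.)
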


Let $\X$ be a weighted projective line of type $(2,2,2,2;\lambda)$. 
It has been observed by ~\cite[Prop 2.5.2 (b)]{GG}  that every $2$-CY tilted algebra of $\mc_{\X}$ satisfies the vanishing condition of \cite[Thm 5.2]{BIRSm}. Consequently, we obtain
\begin{lem}\label{l:gg}
Let $\X$ be a weighted projective line of type $(2,2,2,2;\lambda)$. Let $\mc$ be a $2$-CY triangulated category over $K$ and $M\in \mc$ a basic cluster-tilting object such that $\End_\mc(M)$ is a $2$-CY tilted algebra of $\mc_\X$. If there is a potential $W$ on $Q_M$ such that $J(Q_M,W)\cong \End_\mc(M)$, then for any vertex $k$ of $Q_M$, $\End_\mc(\mu_k(M))\cong J(\mu_k(Q_M,W))$.

\end{lem}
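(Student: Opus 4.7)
The plan is to combine the two cited facts directly, so the proof should be essentially a citation argument. By \cite[Prop 2.5.2(b)]{GG}, every 2-CY tilted algebra of $\mc_\X$ (for $\X$ of type $(2,2,2,2;\lambda)$) satisfies the vanishing condition of \cite[Thm 5.2]{BIRSm}; since $\End_\mc(M)$ is such an algebra by hypothesis, this vanishing condition holds for $\End_\mc(M)$. This is the only input drawn from the specific tubular geometry of $\X$.

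I would then feed this into \cite[Thm 5.2]{BIRSm} itself. That theorem takes as input a Hom-finite 2-CY triangulated category $\mc$, a basic cluster-tilting object $M$, and a quiver-with-potential presentation $J(Q_M, W)\cong \End_\mc(M)$, and concludes, under the vanishing condition, that categorical mutation at an indecomposable summand labelled by a vertex $k$ of $Q_M$ is compatible with QP-mutation at $k$. In particular,
\[
\End_\mc(\mu_k(M)) \cong J(\mu_k(Q_M, W)),
\]
which is precisely the statement of the lemma. All required hypotheses of \cite[Thm 5.2]{BIRSm} are present: $\mc$ is 2-CY by assumption, $M$ is cluster-tilting by assumption, $W$ is a potential on $Q_M$ with $J(Q_M,W)\cong \End_\mc(M)$ by assumption, and the vanishing condition has just been verified.

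The only bookkeeping point, which is also the main thing to watch out for, is that the isomorphism $J(Q_M,W)\cong \End_\mc(M)$ must be compatible with the decomposition of $M$ into indecomposable summands, in the sense that the primitive idempotents of $J(Q_M,W)$ indexed by the vertices of $Q_M$ correspond to the indecomposable direct summands of $M$. This is automatic from the definition of $Q_M$ as the Gabriel quiver of $\End_\mc(M)$, whose vertices are in canonical bijection with the indecomposable summands of $M$. With this labelling fixed, there is no genuine obstacle and the lemma follows as a direct application of the two cited results.
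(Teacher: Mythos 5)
Your proposal matches the paper's own argument exactly: the lemma is stated there as an immediate consequence of the observation from \cite[Prop 2.5.2 (b)]{GG} that every $2$-CY tilted algebra of $\mc_\X$ satisfies the vanishing condition of \cite[Thm 5.2]{BIRSm}, which is then applied to obtain the compatibility of categorical mutation with QP-mutation. Your additional bookkeeping remark about identifying vertices of $Q_M$ with indecomposable summands of $M$ is harmless and correct.
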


\begin{theorem}\label{t:main results 3}
Let  $(Q,W)$ be a Jacobi-finite quiver with non-degenerate potential.  If there is a cluster-tilting object $T$ in $\mc_{(Q,W)}$ such that $\End_{\mc}(T)$ is isomorphic to a  $2$-CY tilted algebra arising from some weighted projective line $\X$  of tubular type, then $\mc$ is triangle equivalent to $\mc_{\X}$.
\end{theorem}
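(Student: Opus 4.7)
The four tubular weight types are $(2,2,2,2;\lambda)$, $(3,3,3)$, $(2,4,4)$ and $(2,3,6)$. For the last three $\X$ has at most three weights, so Theorem~\ref{t:main results 2} applies directly and already yields $\mc_{(Q,W)}\cong\mc_\X$. From now on I may assume $\X=\X(2,2,2,2;\lambda)$.

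First I would reproduce the setup of the proof of Theorem~\ref{t:main results 2}. By Lemma~\ref{Hom-finite} the object $\Gamma(Q,W)$ is cluster-tilting in $\mc_{(Q,W)}$, and by Lemma~\ref{connectedness of mc} the cluster-tilting graph is connected, so $T=\mu_{i_l}\cdots\mu_{i_1}(\Gamma(Q,W))$ for some sequence of vertices. Setting $(Q_T,W_T):=\mu_{i_l}\cdots\mu_{i_1}(Q,W)$ and iterating Lemma~\ref{l:KY2} yields a Jacobi-finite non-degenerate QP with a triangle equivalence $\mc_{(Q,W)}\cong\mc_{(Q_T,W_T)}$ sending $T$ to $\Gamma(Q_T,W_T)$, together with $J(Q_T,W_T)\cong\End_{\mc_{(Q,W)}}(T)$. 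Choose a basic cluster-tilting object $T_0\in\mc_\X$ with $\End_{\mc_\X}(T_0)\cong\End_{\mc_{(Q,W)}}(T)$; its Gabriel quiver is $Q_T$. By~\cite[Thm~8.8]{BKL} (or~\cite[Thm~1.2]{FG}) the cluster-tilting graph of $\mc_\X$ is connected, so $T_{sq}=\mu_{j_s}\cdots\mu_{j_1}(T_0)$ for some $j_1,\ldots,j_s$, and Keller's construction applied to $T_{sq}$ gives $\End_{\mc_\X}(T_{sq})\cong J(Q^{(2,2,2,2)},W_\lambda)$.

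The heart of the argument is to mutate $T$ along the same sequence, set $T^*:=\mu_{j_s}\cdots\mu_{j_1}(T)\in\mc_{(Q,W)}$, and prove that $\End_{\mc_{(Q,W)}}(T^*)\cong\End_{\mc_\X}(T_{sq})$. I would apply Lemma~\ref{l:gg} iteratively to $\mc_\X$, using the transported potential $W_T$ on $Q_{T_0}=Q_T$: the initial hypothesis $J(Q_T,W_T)\cong\End_{\mc_\X}(T_0)$ holds by construction, and the conclusion of step $r$,
\[
\End_{\mc_\X}\bigl(\mu_{j_r}\cdots\mu_{j_1}(T_0)\bigr)\cong J\bigl(\mu_{j_r}\cdots\mu_{j_1}(Q_T,W_T)\bigr),
\]
supplies the hypothesis for step $r+1$ with the mutated potential $\mu_{j_r}\cdots\mu_{j_1}(W_T)$. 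After all $s$ iterations this yields $\End_{\mc_\X}(T_{sq})\cong J(\mu_{j_s}\cdots\mu_{j_1}(Q_T,W_T))$. Simultaneously, iterating Lemma~\ref{l:KY2} on the $\mc_{(Q,W)}$-side gives a triangle equivalence $\mc_{(Q,W)}\cong\mc_{\mu_{j_s}\cdots\mu_{j_1}(Q_T,W_T)}$ together with $\End_{\mc_{(Q,W)}}(T^*)\cong J(\mu_{j_s}\cdots\mu_{j_1}(Q_T,W_T))$, so the two endomorphism algebras agree.

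To finish, since cluster-tilting mutation matches quiver mutation on both sides, $Q_{T^*}=Q_{T_{sq}}=Q^{(2,2,2,2)}$, hence $\hat W:=\mu_{j_s}\cdots\mu_{j_1}(W_T)$ is a non-degenerate potential on $Q^{(2,2,2,2)}$. By Lemma~\ref{potential on 2222}(1) it is right equivalent to $W_{\hat\lambda}$ for some $\hat\lambda\in K\setminus\{0,1\}$, and the isomorphism $J(Q^{(2,2,2,2)},W_{\hat\lambda})\cong J(Q^{(2,2,2,2)},W_\lambda)$ forces $\hat\lambda\in\{\lambda,\lambda^{-1}\}\subset O(\lambda)$ by Lemma~\ref{potential on 2222}(2). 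Combining Lemma~\ref{l:KY1} with Keller's construction applied to $\X(\hat\lambda)$, we deduce $\mc_{(Q,W)}\cong\mc_{(Q^{(2,2,2,2)},\hat W)}\cong\mc_{(Q^{(2,2,2,2)},W_{\hat\lambda})}\cong\mc_{\X(\hat\lambda)}\cong\mc_\X$. The main subtlety, and the step I expect to demand the most care, is the iteration of Lemma~\ref{l:gg} using the transported potential $W_T$; it critically exploits the freedom in the statement of Lemma~\ref{l:gg} to plug in any potential realizing the endomorphism algebra as a Jacobian algebra, even one originally arising from a different $2$-CY category.
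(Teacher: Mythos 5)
Your proposal follows the paper's strategy quite closely: the same reduction to weight type $(2,2,2,2)$ via Theorem~\ref{t:main results 2}, the same transport of the potential from $(Q,W)$ to $Q_T$ by iterating Lemma~\ref{l:KY2}, and the same use of Lemma~\ref{l:gg} to push that potential along a mutation sequence inside $\mc_\X$ until its quiver becomes $Q^{(2,2,2,2)}$, finishing with Lemma~\ref{potential on 2222}. The one genuine difference in route is at the comparison step: the paper introduces a \emph{second} non-degenerate potential $W_M$ on $Q_T$ coming from $\mc_\X$ via Lemma~\ref{l:non-deg}, mutates both potentials to $Q^{(2,2,2,2)}$, and compares the two resulting parameters $t_0,t_1$ through the common algebra $\End_{\mc_\X}(M')$; you instead mutate only the transported potential and compare its parameter $\hat\lambda$ directly with $\lambda$ via the known endomorphism algebra of the target cluster-tilting object. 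Your variant is slightly leaner and equally valid, \emph{provided} the target object is chosen correctly.

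That is where you have a concrete (though easily repaired) error: you take the target to be $T_{sq}$ and assert that Keller's construction applied to $T_{sq}$ yields $(Q^{(2,2,2,2)},W_\lambda)$. For weight type $(2,2,2,2)$ the squid quiver $Q_{T_{sq}}$ has $10$ arrows (a double arrow $\co\Rightarrow\co(\vec{c})$ and four triangles through it), whereas $Q^{(2,2,2,2)}$ has $12$ arrows with every vertex of in- and out-degree $2$; these quivers are not isomorphic, so $\End_{\mc_\X}(T_{sq})\not\cong J(Q^{(2,2,2,2)},W_\lambda)$. The object you actually need is the cluster-tilting object $N\in\mc_\X$ corresponding to the tubular algebra $A=kQ_A/I_A$ of type $(2,2,2,2;\lambda)$ defined in the paper, for which $\mc_A\cong\mc_\X$ and Lemma~\ref{l:K11} give $\End_{\mc_\X}(N)\cong J(\tilde{Q}_A,W_A)=J(Q^{(2,2,2,2)},W_\lambda)$. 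Replacing $T_{sq}$ by $N$ throughout (using connectedness of $\mathcal{G}_{ct}(\mc_\X)$ to reach $N$ from $T_0$), your argument goes through; the remaining steps, including the careful iteration of Lemma~\ref{l:gg} that you correctly identify as the delicate point, match the paper's reasoning.
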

\begin{proof}
 According to Section~\ref{ss:classification},  $\X$ is of tubular type if and only if $\X$ has weight sequences $(3,3,3)$, $(2,4,4)$, $(2,3,6)$ or $(2,2,2,2)$.  By Theorem~\ref{t:main results 2}, it remains to consider the case $(2,2,2,2)$. 
 
 Let $\X=\X(2,2,2,2; \lambda)$ for some $\lambda\in K\backslash\{0,1\}$. Note that we have $\mc_{\X}\cong \mc_{(Q^{(2,2,2,2)}, W_\lambda)}$. In particular, there is a basic cluster-tilting object in $\mc_\X$ whose quiver is $Q^{(2,2,2,2)}$. Recall that mutation of cluster-tilting objects of $\mc_\X$ is compatible with quiver mutations and the cluster-tilting graph of $\mc_\X$ is connected. Since $\End_{\mc_{(Q,W)}}(T)$ is a $2$-CY tilted algebra for $\mc_\X$, it follows that $Q_T$ is mutation equivalent to $Q^{(2,2,2,2)}$. In particular, there is a sequence of vertices $j_1,j_2,\cdots,j_s$ of $Q_T$ such that  $Q^{(2,2,2,2)}=\mu_{j_s}\cdots\mu_{j_2}\mu_{j_1}(Q_T)$. 

Let $M$ be a basic cluster-tilting object of $\mc_\X$ such that $\End_{\mc_{(Q,W)}}(T)\cong \End_{\mc_\X}(M)$. Clearly, we may identify quiver $Q_M$ with $Q_T$. 
By Lemma~\ref{l:non-deg}, there is a  non-degenerate potential $W_M$ on $Q_M$ such that  $\End_{\mc_{(Q,W)}}(T)\cong\End_{\mc_\X}(M)\cong J(Q_M,W_{M})$ and  $\mc_{(Q_M,W_{M})}=\mc_\X$.    Denote by $M':=\mu_{j_s}\cdots\mu_{j_2}\mu_{j_1}( M)\in \mc_{\X}$ and $(Q', W')=\mu_{j_s}\cdots\mu_{j_2}\mu_{j_1}(Q_{M},W_{M})$. It is clear that $(Q', W')$ is a non-degenerate quiver with potential. According to Lemma~\ref{correspondence with mutation}, we have $Q'=Q^{(2,2,2,2)}$. By Lemma~\ref{potential on 2222}, any non-degenerate potential on $Q^{(2,2,2,2)}$ has the form  $W_{t}$ for some $t\notin\{0,1\}$ up to right equivalence. Without loss of generality, we may assume $W'=W_{t_0}$ for some $t_0\in K\backslash\{0,1\}$. By Lemma~\ref{l:gg}, we have $\End_{\mc_{\X}}(M')\cong J(Q^{(2,2,2,2)}, W_{t_0})$.

Since $(Q,W)$ is Jacobi-finite, $\Gamma(Q,W)$ is a cluster-tilting object of $\mc_{(Q,W)}$. Note that  $\End_{\mc_{(Q,W)}}(T)$ is a $2$-CY tilted algebra for $\mc_\X$, it follows that $T$ is reachable from $\Gamma(Q,W)$ by Lemma~\ref{connectedness of mc}. Consequently, there is a sequence of vertices $i_1,\cdots, i_l$ of $Q$ such that \[T=\mu_{i_l}\cdots \mu_{i_2}\mu_{i_1}(\Gamma(Q,W)) ~\text{ and}~ \End_{\mc_{(Q,W)}}(T)\cong J(\mu_{i_l}\cdots \mu_{i_2}\mu_{i_1}(Q,W))\] by Lemma~\ref{l:KY2}. 
Clearly, the quiver of $\mu_{i_l}\cdots \mu_{i_2}\mu_{i_1}(Q,W)$ is $Q_T$ and we may write $(Q_T,W'')=\mu_{i_l}\cdots \mu_{i_2}\mu_{i_1}(Q,W)$. Consequently, $\End_{\mc_\X}(M)\cong J((Q_T, W'')$. By Lemma~\ref{l:gg}, we conclude that \[\End_{\mc_\X}(M')\cong J(\mu_{j_s}\cdots\mu_{j_2}\mu_{j_1}(Q_T, W'')).\] Let us write $\mu_{j_s}\cdots\mu_{j_2}\mu_{j_1}(Q_T, W'')=(Q^{(2,2,2,2)}, W_{t_1})$ for some $t_1\in K\backslash\{0,1\}$, which is a non-degenerate quiver with potential. Hence $J(Q^{(2,2,2,2)}, W_{t_0})\cong J(Q^{(2,2,2,2)}, W_{t_1})$. By Lemma~\ref{potential on 2222} (2),  we have either $t_0=t_1$ or $t_0=t_1^{-1}$. In any case, $\mc_{(Q^{(2,2,2,2)}, W_{t_0})}\cong \mc_{(Q^{(2,2,2,2)}, W_{t_1})}$. By  Lemma~\ref{l:KY1} and Lemma~\ref{l:KY2}, we conclude that 
\[\mc_{(Q,W)}\cong \mc_{(Q_{T},W'')}\cong\mc_{(Q^{(2,2,2,2)},W_{t_1})} \cong\mc_{(Q^{(2,2,2,2)},W_{t_0})}\cong \mc_{(Q_M,W_M)}\cong \mc_{\X}.\]

 \end{proof}

\begin{remark}
If $\X$ is of type $(2,2,2,2;\lambda)$, then the condition `non-degenerate' in Theorem \ref{t:main results 3} is superfluous. 
One can prove that the quiver with potential $(Q,W)$ is non-degenerate.
Indeed, by Lemma~\ref{l:non-deg}, there is a non-degenerate potential $W_T$ on $Q_T$ such that $\mc_{(Q_T,W_T)}\cong \mc_\X$ and $J(Q_T,W_T)\cong \End_{\mc_{(Q,W)}}(T)$. Applying Lemma~\ref{l:gg}, we know that $\End_{\mc_{(Q,W)}}(\mu_k(T))\cong J(\mu_k(Q_T,W_T))$ for each vertex $k$ of $Q_T$. Consequently, $\End_{\mc_{(Q,W)}}(\mu_k(T))$ is a $2$-CY tilted algebra of $\mc_\X$. Continuing this process, we conclude that the endomorphism algebra $\End_{\mc_{(Q,W)}}(M)$ of a basic cluter-tilting $M\in \mc_{(Q,W)}$ reachable from $T$ is a $2$-CY tilted algebra of $\mc_\X$. It follows from Lemma~\ref{connectedness of mc} that $\Gamma(Q,W)$ is reachable from $T$. Hence $J(Q,W)\cong \End_{\mc_{(Q,W)}}(\Gamma(Q,W))$ is a $2$-CY tilted algebra of $\mc_\X$. Let $M\in \mc_\X$ be the basic cluster-tilting object such that $J(Q,W)\cong \End_{\mc_\X}(M)$.  We may identify $Q$ with $Q_M$. By Lemma~\ref{l:gg}, for each vertex $k$ of $Q$, we have $J(\mu_k(Q,W))\cong \End_{\mc_\X}(\mu_k(M))$. In particular, $\mu_k(Q,W)$ is $2$-acyclic. Continuing this process, we conclude that $(Q,W)$ is non-degenerate.
\end{remark}

\section{Cluster category $\mc_\X$ as stable category arising from preprojective algebra}\label{s:main-result-2}

\subsection{ 2-CY categories associated with elements in Coxeter groups}\label{s:quiver-preprojective-alg}
We follow \cite{BIRSc}.
Let $Q$ be an acyclic  quiver with  vertices $1,2,\cdots,n$. Denote by $\Lambda$  the preprojective algebra associated $Q$. For each vertex $i$, denote by $e_i$ the associated primitive idempotent
and $I_i=\Lambda(1-e_i)\Lambda$ the ideal of $\Lambda$.
Let  $W_Q$ be  the Coxeter group associated  to $Q$.
For a reduced expression $w=s_{i_1}\cdots s_{i_l}$ of $w\in W_Q$, denote by $I_w=I_{i_1}\cdots I_{i_l}$ and $\Lambda_w=\Lambda/I_w$. It is known that $I_w$ and $\Lambda_w$ are independent of the choice of reduced expressions of $w$, and $\Lambda_w$  is a finite dimensional $K$-algebra. Denote by $\Sub\Lambda_w$ the full subcategory of $\mod \Lambda_w$ whose objects are the submodules of finitely generated projective $\Lambda_w$-modules. By \cite{BIRSc}, $\Sub\Lambda_w$  is a Frobenius category and the stable category $\underline{\Sub}\Lambda_w$ is a $\Hom$-finite $2$-CY triangulated category. Moreover, $T(i_1,\ldots, i_l):=\Lambda/I_{i_1}\oplus \Lambda/I_{i_1}I_{i_2}\oplus\cdots\oplus \Lambda/I_{i_1}\cdots I_{i_l}$  is a basic cluster-tilting object in $\Sub\Lambda_w$, which  induces a basic cluster-tilting object $\overline{T}(i_1,\ldots,i_l)$ in $\underline{\Sub}\Lambda_w$.  By abuse of notations, for a fixed reduced expression $w=s_{i_1}\cdots s_{i_l}$, we also denote by $T_w:=T(i_1,\ldots, i_l)$ and $\overline{T}_w:=\overline{T}(i_1,\ldots, i_l)$.

The Gabriel quiver of $\End_{\underline{\Sub}\Lambda_w}(\overline{T}_w)$ can be constructed from the reduced expression $w=s_{i_1}\cdots s_{i_l}$ directly. Namely, we  associate with the reduced expression $w=s_{i_1}\cdots s_{i_l}$ a quiver $\widetilde{Q}(w)$ as follows, where the vertices correspond to the $s_{i_k}$.
\begin{itemize}
\item For two consecutive $i (i\in \{1, \cdots, n\})$, draw an arrow from the second one to the first one.
\item  For each edge \xymatrix{i\ar@{-}[r]^{d_{ij}}&j} of the underlying diagram of $Q$, pick out the expression consisting of the $i_k$ which are $i$ or $j$, so that we have $\cdots ii\cdots ijj \cdots jii\cdots i\cdots $. Draw $d_{ij}$ arrows from the last $i$ in a connected set of $i $ to the last $j$ in the next set of $j$, and do the same from $j$ to $i.$
\end{itemize}
Denote by $Q(w)$ the quiver obtained from $\widetilde{Q}(w)$ by removing the last $i$ for each vertex $i$ of $Q$.  
It was shown by \cite[Thm III.4.1]{BIRSc} that the Gabriel quiver  of  $\End_{\underline{\Sub}\Lambda_w}(\overline{T}(w))$  is $Q(w)$.
Moreover,  there is a rigid potential $W_{w}$ on $Q(w)$ such that $\End_{\underline{\Sub}\Lambda_w}(\overline{T}(w))\cong J(Q(w), W_{w})$ by \cite[Thm 6.4]{BIRSm}.

\begin{lem}\label{judge from quiver}
If the quiver $Q(w)$ of  $\End_{\underline{\Sub}\Lambda_w}(\overline{T}(w))$ is isomorphic to the quiver of a $2$-CY tilted algebra arising  from a weighted projective line $\X$ with at most three weights, then $\underline{\Sub}\Lambda_w$ is triangle equivalent to $\mc_\X$.
\end{lem}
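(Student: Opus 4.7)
The strategy is to transfer the argument of Theorem~\ref{t:main results 2} to $\underline{\Sub}\Lambda_w$, via the uniqueness of non-degenerate potentials on $Q(w)$ furnished by Lemma~\ref{unique potential}.

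First I would show that $\End_{\underline{\Sub}\Lambda_w}(\overline{T}(w))$ is a $2$-CY tilted algebra of $\mc_\X$. The potential $W_w$ is rigid by \cite[Thm~6.4]{BIRSm}, and hence non-degenerate by \cite[Cor~8.2]{DWZ}. On the other hand, by hypothesis there exists a basic cluster-tilting object $M\in\mc_\X$ whose quiver $Q_M$ is isomorphic to $Q(w)$, and Lemma~\ref{l:non-deg} supplies a non-degenerate potential $W_M$ on $Q_M$ such that $\End_{\mc_\X}(M)\cong J(Q_M,W_M)$. Because $\X$ has at most three weights, Lemma~\ref{unique potential} forces $W_w$ and $W_M$ to be right equivalent as potentials on the common quiver, and Lemma~\ref{l:KY1} then yields
\[\End_{\underline{\Sub}\Lambda_w}(\overline{T}(w))\cong J(Q(w),W_w)\cong J(Q_M,W_M)\cong \End_{\mc_\X}(M).\]

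Next I would realize $\underline{\Sub}\Lambda_w$ as the generalized cluster category of a Jacobi-finite non-degenerate QP so that Theorem~\ref{t:main results 2} becomes applicable. By the Amiot-Reiten-Todorov realization \cite{ART}, $\underline{\Sub}\Lambda_w\cong \mc_B$ for some finite-dimensional $K$-algebra $B$ of global dimension at most two, and Lemma~\ref{l:K11} then yields $\mc_B\cong \mc_{(\tilde{Q}_B,W_B)}$. The QP $(\tilde{Q}_B,W_B)$ is Jacobi-finite because $J(\tilde{Q}_B,W_B)\cong \End_{\mc_B}(B)$ is finite-dimensional, and $W_B$ is non-degenerate by a verbatim analogue of the proof of Lemma~\ref{l:non-deg}, using that the cluster structure on $\underline{\Sub}\Lambda_w$ of \cite{BIRSc} makes mutation of cluster-tilting objects compatible with quiver mutation. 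Applying Theorem~\ref{t:main results 2} to $(\tilde{Q}_B,W_B)$ and the cluster-tilting object $\overline{T}(w)$ of $\mc_{(\tilde{Q}_B,W_B)}\cong \underline{\Sub}\Lambda_w$ then produces the desired equivalence $\underline{\Sub}\Lambda_w\cong \mc_\X$, since the endomorphism algebra of $\overline{T}(w)$ was identified with a $2$-CY tilted algebra of $\mc_\X$ in the previous step.

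I expect the realization step to be the main obstacle: Jacobi-finiteness of $(\tilde{Q}_B,W_B)$ is a direct consequence of $\Hom$-finiteness of $\underline{\Sub}\Lambda_w$, but non-degeneracy requires careful bookkeeping with the cluster structure of \cite{BIRSc} in order to conclude, as in Lemma~\ref{l:non-deg}, that every iterated QP-mutation of $(\tilde{Q}_B,W_B)$ stays $2$-acyclic.
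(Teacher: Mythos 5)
Your proof is correct, and its backbone --- realizing $\underline{\Sub}\Lambda_w$ as the generalized cluster category of a Jacobi-finite non-degenerate QP on $Q(w)$ and then exploiting the uniqueness of non-degenerate potentials from Lemma~\ref{unique potential} --- is the same as the paper's. The packaging differs in two respects. First, the paper concludes directly: it identifies the realizing potential $W_w'$ on $Q(w)$ (from \cite[Thm 4.4]{ART} and Lemma~\ref{l:K11}) as right equivalent to the potential $W_T$ coming from Lemma~\ref{l:non-deg}, and gets $\underline{\Sub}\Lambda_w\cong\mc_{(Q(w),W_w')}\cong\mc_{(Q_T,W_T)}\cong\mc_\X$ from Lemma~\ref{l:KY1}; you instead first show $\End_{\underline{\Sub}\Lambda_w}(\overline{T}(w))$ is a $2$-CY tilted algebra of $\mc_\X$ and then feed this into Theorem~\ref{t:main results 2} as a black box. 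That is legitimate (the theorem is proved in the preceding section, so there is no circularity) but slightly longer, since Theorem~\ref{t:main results 2} itself reduces to the same two lemmas. Second, for the step you correctly single out as the delicate one --- non-degeneracy of the realizing potential --- the paper avoids re-running the argument of Lemma~\ref{l:non-deg}: it observes that $W_w'$ is cyclically equivalent to the rigid potential $W_w$ of \cite[Thm 6.4]{BIRSm} (by the proof of \cite[Prop 3.12]{ART}), hence is itself rigid and therefore non-degenerate by \cite[Cor 8.2]{DWZ}. Your alternative, redoing the Lemma~\ref{l:non-deg} argument using the cluster structure on $\underline{\Sub}\Lambda_w$ from \cite{BIRSc} (no loops or $2$-cycles, mutation of cluster-tilting objects compatible with quiver mutation), also works, provided you note that under the ART equivalence the image of $B$ corresponds to $\overline{T}(w)$ so that Lemma~\ref{l:KY2} applies at each mutation step; the rigidity route is simply shorter and sidesteps that bookkeeping.
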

\begin{proof}
By \cite[Theorem 4.4]{ART} and Lemma~\ref{l:K11}, there is a potential $W_{w}'$ on $Q(w)$ such that  $\underline{\Sub}\Lambda_w\cong \mc_{(Q(w), W_{w}')}$.  Moreover, $W_{w}'$
 is cyclically equivalent to the potential $W_{w}$ according to the proof of \cite[Proposition 3.12]{ART}. Consequently, $W_w'$ is non-degenerate by  \cite[Corollary 8.2]{DWZ}.

 Let $T$ be a basic cluster-tilting object in $\mc_\X$ such that $Q_T=Q(w)$. It follows from Lemma~\ref{l:non-deg}  that  there is a non-degenerate potential $W_T$ on $Q_T$ such that $\End_{\mc_\X}(T)\cong J(Q_T,W_T)$ and $\mc_{(Q_T,W_T)}\cong \mc_\X$.
Since   $\X$ has at most three weights,  there is a unique non-degenerate potential on $Q_{T}$ up to right equivalence by Lemma~\ref{unique potential}.  Hence  $W_T$ and $W_{w}'$ are right equivalent and \[\underline{\Sub}\Lambda_w\cong\mc_{(Q(w), W_{w}')}\cong\mc_{(Q_T,W_T)}\cong\mc_\X\]
by Lemma~\ref{l:KY1}.
\end{proof}

\subsection{Cluster category $\mc_\X$ via Buan-Iyama-Reiten-Scott's construction}

Let $\X$ be a weighted projective line with weight sequence $(p_1,p_2,p_3)$ with $p_i\ge 2$ for each $i$.  Recall that  $S_i$ is the unique simple object in the $i$-th exceptional tube of rank $p_i$ satisfying $\Hom_{\coh\X}(\co,S_i)\neq 0$. For a positive integer $j$,  $S_i^{[j]}$ is  the unique indecomposable  object in the same tube with $S_i$ such that  $S_i^{[j]}$ has length $j$ and top $S_i$.

Denote by
\[T^{(p_1,p_2,p_3)}=\co\oplus \co(\vec{c})\oplus \co(\vec{x}_1)\oplus \co(\vec{x}_2)\oplus \co(\vec{x}_3)\oplus (\bigoplus_{i=1}^3(\bigoplus_{j=1}^{p_i-2} S_{i}^{[j]} )).\]
 It is straightforward to check that $T^{(p_1,p_2,p_3)}$ is a tilting object in $\coh\X$, hence a cluster-tilting object in $\mc_\X$. 
The endomorphism algebra $B=\End_{\coh\X}(T^{(p_1,p_2,p_3)})$ is given  by  the following quiver  with relations:
\[
\xymatrix@R=0.3cm@C=0.4cm{
&&\co(\vec{x}_1)\ar@{.}[rr]\ar[ddr]&&S_{1}^{[p_1-2]}\ar[r]&S_{1}^{[p_1-3]}\ar[r]&\cdots\ar[r]&S_{1}^{[1]}&&\\
&&\co(\vec{x}_2)\ar@{.}[rr]\ar[dr]&&S_{2}^{[p_2-2]}\ar[r]&S_{2}^{[p_2-3]}\ar[r]&\cdots\ar[r]&S_{2}^{[1]}&&\\
&\co\ar[uur]\ar[ur]\ar[dr]\ar@{.}[rr]&&\co(\vec{c})\ar[uur]\ar[ur]\ar[dr]&&&&&&\\
&&\co(\vec{x}_3)\ar@{.}[rr]\ar[ur]&&S_{3}^{[p_3-2]}\ar[r]&S_{3}^{[p_3-3]}\ar[r]&\cdots\ar[r]&S_{3}^{[1]}.&&
}
\]
According to Section~\ref{s:keller} and \ref{s:gen-cluster-cat-tilting-quiver},  the Gabriel quiver ${Q}_{T^{(p_1,p_2,p_3)}}$ of $\End_{\mc_\X}(T^{(p_1,p_2,p_3)})$ is as follows:
\[
\xymatrix@R=0.3cm@C=0.4cm{
&&\co(\vec{x}_1)\ar[ddr]&&S_{1}^{[p_1-2]}\ar[ll]\ar[r]&S_{1}^{[p_1-3]}\ar[r]&\cdots\ar[r]&S_{1}^{[1]}&&\\
&&\co(\vec{x}_2)\ar[dr]&&S_{2}^{[p_2-2]}\ar[ll]\ar[r]&S_{2}^{[p_2-3]}\ar[r]&\cdots\ar[r]&S_{1}^{[1]}&&\\
{Q}_{T^{(p_1,p_2,p_3)}}:&\co\ar[uur]\ar[ur]\ar[dr]\ar@{.}[rr]&&\co(\vec{c})\ar[ll]\ar[uur]\ar[ur]\ar[dr]&&&&&&\\
&&\co(\vec{x}_3)\ar[ur]&&S_{3}^{[p_3-2]}\ar[ll]\ar[r]&S_{3}^{[p_3-3]}\ar[r]&\cdots\ar[r]&S_{3}^{[1]}.&&
}
\]

\begin{thm}\label{t:weighted via preprojective}
Let $\X$ be a weighted projective line with at most three weights and $\mc_\X$ the associated cluster category. There is an acyclic quiver $Q$ and a reduced word $w\in W_Q$ such that $\mc_\X$ is triangle equivalent to  $\underline{\Sub}\Lambda_w$.
\end{thm}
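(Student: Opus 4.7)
The plan is to apply Lemma~\ref{judge from quiver}: it suffices to construct an acyclic quiver $Q$ and a reduced word $w=s_{i_1}\cdots s_{i_l}$ in $W_Q$ such that the quiver $Q(w)$ coming from the Buan-Iyama-Reiten-Scott construction is isomorphic to the Gabriel quiver of some $2$-CY tilted algebra of $\mc_\X$.

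If $\X$ has at most two weights, then by Section~\ref{ss:classification} $\X$ is of domestic type and $\coh\X$ is derived equivalent to the path algebra $KQ'$ of an acyclic quiver $Q'$. Hence $\mc_\X\cong\mc_{Q'}$, and the well-known realization of acyclic cluster categories as stable subcategories $\underline{\Sub}\Lambda_w$ attached to preprojective algebras (recalled in the introduction) handles this case at once.

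For the essential three-weight case $(p_1,p_2,p_3)$, I would take as target the Gabriel quiver $Q_{T^{(p_1,p_2,p_3)}}$ of the $2$-CY tilted algebra $\End_{\mc_\X}(T^{(p_1,p_2,p_3)})$ displayed just above the theorem. The plan is to choose $Q$ to be a star-shaped acyclic tree whose central vertex will account for the pair $\co,\co(\vec{c})$ and whose three arms will account for the three exceptional-tube chains of the target, and then to build $w\in W_Q$ by interlacing sweeps along the arms with repetitions of the central reflection, with total length tuned so that after removing the last occurrence of each letter exactly $p_1+p_2+p_3-1$ vertices remain in $Q(w)$. The identification $Q(w)\cong Q_{T^{(p_1,p_2,p_3)}}$ is then verified by matching each target arrow either with a pair of consecutive occurrences of a single letter in $w$ (yielding the intra-letter arrows of $\widetilde{Q}(w)$) or with the interleaving pattern of two adjacent letters of $Q$ (yielding the inter-letter arrows).

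The main obstacle is precisely this combinatorial matching. The subtle point is that the target contains, for every arm $i$, two triangles $\co\to\co(\vec{x}_i)\to\co(\vec{c})\to\co$ and $\co(\vec{c})\to S_i^{[p_i-2]}\to\co(\vec{x}_i)\to\co(\vec{c})$ sharing the edge $\co(\vec{x}_i)\to\co(\vec{c})$; producing both triangles simultaneously forces the central vertex of $Q$ to be joined in the tree to the first two arm-vertices and forces the interleaving of letters in $w$ to be arranged precisely so that both triangles appear in $\widetilde{Q}(w)$ and survive the deletion of the last-letter copies. Once this quiver isomorphism is established, Lemma~\ref{judge from quiver} immediately gives $\mc_\X\cong\underline{\Sub}\Lambda_w$ and hence the theorem.
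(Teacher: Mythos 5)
Your overall strategy is exactly the paper's: reduce everything to Lemma~\ref{judge from quiver}, dispose of the domestic cases via the known realization of acyclic cluster categories as $\underline{\Sub}\Lambda_w$, and for genuine three-weight types aim to produce a reduced word $w$ on a star-shaped acyclic quiver whose $Q(w)$ equals $Q_{T^{(p_1,p_2,p_3)}}$. Your reading of the target quiver (the two $3$-cycles through $\co(\vec x_i)\to\co(\vec c)$ for each arm) is also correct. However, what you have written is a plan, not a proof: the entire content of the theorem in the three-weight case is the explicit exhibition of $Q$ and $w$ together with the two verifications that (i) $w$ is a reduced expression and (ii) $Q(w)\cong Q_{T^{(p_1,p_2,p_3)}}$, and you explicitly defer both, calling the combinatorial matching ``the main obstacle'' without resolving it. The paper resolves it by writing down $Q$ as a star with central vertex $o$ and arms $a_\bullet,b_\bullet,c_\bullet$ of lengths $p_i-2$ (with a separate shape when $p_1=2$), taking $w$ to be the concatenation of three sweeps $s_o s_{a_1}s_{b_1}s_{c_1}\,\cdot\, s_o s_{a_1}\cdots s_{a_{p_1-2}}s_{b_1}\cdots s_{c_{p_3-2}}\,\cdot\,(\text{same block again})$, invoking \cite[Thm 2.3]{ORT} for reducedness, and then reading off $Q(w)$ from the Buan--Iyama--Reiten--Scott recipe. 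None of that is in your write-up, and the reducedness check in particular is not something you can wave at: without it $\Lambda_w$ and $T_w$ are not even well-controlled.

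There is also a smaller structural slip: you propose to treat \emph{all} three-weight cases by the explicit construction, but when two of the weights equal $2$ two of the arms of your star have length $p_i-2=0$ and the pattern degenerates (the vertices $\co(\vec x_i)$ then have no tube behind them and the word must change shape). The paper sidesteps this by observing that weight types with at least two weights equal to $2$ are domestic, hence already covered by the acyclic case; you should either do the same or handle the degenerate word separately. In short: right lemma, right target, right general shape of $Q$ and $w$, but the proof's essential combinatorial content --- the actual word, its reducedness, and the computation of $Q(w)$ --- is missing.
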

\begin{proof}
Let us assume that $\X$ has at most two weights or $\X$  has three weights but at least two of them are $2$.  In particular, $\X$ is of domestic type.  In this case,  $\coh\X$ is derived equivalent to the module category of  $KQ$ for some acyclic quiver $Q$ of affine type. By \cite[Thm III.3.4]{BIRSc},  each cluster category $\mc_{KQ}$ can be realized as the stable category $\underline{\Sub}\Lambda_w$ for some reduced word $w\in W_Q$.
Therefore $\mc_\X\cong\mc_{KQ}\cong \underline{\Sub}\Lambda_w$.

Now assume that $\X$ has three weights $(p_1,p_2,p_3)$ and at most one of them is $2$.  Up to permutation, we may assume that $p_1\leq p_2\leq p_3$. 

Suppose that $p_1=2$.  By assumption,  $p_2>2$ and $p_3>2$. Let $Q$ be the following quiver
\[
\xymatrix@R=0.3cm@C=0.4cm{
&&b_1\ar[r]&b_2\ar[r]&\cdots\ar[r]&b_{p_2-2}&&\\
a_1\ar[r]&o\ar[ur]\ar[dr]&&&&&&\\
&&c_1\ar[r]&c_2\ar[r]&\cdots\ar[r]&c_{p_3-2}.&&
}
\]
Let $w=s_os_{b_1}s_{c_1}s_{a_1}s_os_{b_1}\cdots s_{b_{p_2-2}}s_{c_1}\cdots s_{c_{p_3-2}}s_{a_1}s_os_{b_1}\cdots s_{b_{p_2-2}}s_{c_1}\cdots s_{c_{p_3-2}}$ be a word in $W_{Q}$. 
It follows from \cite[Thm 2.3]{ORT} that $w$ is a reduced expression. Applying the construction in Section~\ref{s:quiver-preprojective-alg}, we obtain the quiver $Q(w)$ of $\End_{\underline{\Sub}\Lambda_{w}}(\overline{T}_{w})$ as follows
\[
\xymatrix@R=0.3cm@C=0.4cm{
&& {a_1}\ar[ddr]&&&&&&&\\
&& {b_1}\ar[dr]&& {b_1}\ar[r]\ar[ll]& {b_2}\ar[r]&\cdots\ar[r]& {b_{p_2-2}}&&\\
{Q(w)}:& o\ar[uur]\ar[ur]\ar[dr]&& o\ar[ur]\ar[ll]\ar[dr]&&&&&&\\
&& {c_1}\ar[ur]&& {c_1}\ar[ll]\ar[r]& {c_2}\ar[r]&\cdots\ar[r]& {c_{p_3-2}.}&&
}
\]
In particular, $Q(w)=Q_{T^{(2, p_2,p_3)}}$.  Consequently,   $\mc_\X\cong \underline{\Sub}\Lambda_{w}$ by Lemma~\ref{judge from quiver}.

Now suppose that  $p_1\ge 3$. Let $Q$ be the following quiver 
\[
\xymatrix@R=0.3cm@C=0.4cm{
&a_1\ar[r]&a_2\ar[r]&\cdots\ar[r]&a_{p_1-2}&&\\
&b_1\ar[r]&b_2\ar[r]&\cdots\ar[r]&b_{p_2-2}&&\\
o\ar[uur]\ar[ur]\ar[dr]&&&&&&\\
&c_1\ar[r]&c_2\ar[r]&\cdots\ar[r]&c_{p_3-2}.&&
}
\]
Let $W_Q$ be the Coxeter group of $Q$, define \[w=s_os_{a_1}s_{b_1}s_{c_1}s_os_{a_1}\cdots s_{p_1-2}s_{b_1}\cdots s_{b_{p_2-2}}s_{c_1}\cdots s_{c_{p_3-2}}s_os_{a_1}\cdots s_{p_1-2}s_{b_1}\cdots s_{b_{p_2-2}}s_{c_1}\cdots s_{c_{p_3-2}}\in W_Q.\] 
Again by \cite[Thm 2.3]{ORT}, $w$ is a reduced word. Similarly, we obtain the quiver  $Q(w)$ of $\End_{\underline{\Sub}\Lambda_{w}}(\overline{T}_{w})$ as follows
 \[
\xymatrix@R=0.3cm@C=0.4cm{
&&{a_1}\ar[ddr]&& {a_1}\ar[ll]\ar[r]& {a_2}\ar[r]&\cdots\ar[r]& {a_{p_1-2}}&&\\
&& {b_1}\ar[dr]&& {b_1}\ar[ll]\ar[r]& {b_2}\ar[r]&\cdots\ar[r]& {b_{p_2-2}}&&\\
{Q(w)}:& o\ar[uur]\ar[ur]\ar[dr]&& o\ar[ll]\ar[uur]\ar[ur]\ar[dr]&&&&&&\\
&& {c_1}\ar[ur]&& {c_1}\ar[ll]\ar[r]& {c_2}\ar[r]&\cdots\ar[r]& {c_{p_3-2}.}&&
}
\]
In particular,  $Q(w)={Q}_{T^{(p_1,p_2,p_3)}}$. By  Lemma~\ref{judge from quiver}, $\mc_\X\cong \underline{\Sub}\Lambda_{w}$.
\end{proof}

\begin{cor}
Let $\X$ be a weighted projective line with  at most three weights. Then each $2$-CY tilted algebra arising from $\mc_\X$ is a Jacobian  algebra of a quiver with rigid potential.
\end{cor}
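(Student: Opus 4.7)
The plan is to start with one explicit cluster-tilting object of $\mc_\X$ whose endomorphism algebra is already known to be the Jacobian algebra of a rigid $QP$, and then propagate rigidity to every other cluster-tilting object via mutation. The initial object is provided by Theorem~\ref{t:weighted via preprojective}: writing $\mc_\X \cong \underline{\Sub}\Lambda_w$ for an acyclic quiver $Q$ and a reduced word $w \in W_Q$, the image $T_0 \in \mc_\X$ of $\overline{T}_w \in \underline{\Sub}\Lambda_w$ under this equivalence satisfies $\End_{\mc_\X}(T_0) \cong J(Q(w), W_w)$, where $W_w$ is the rigid potential of \cite[Thm 6.4]{BIRSm} recalled in Section~\ref{s:quiver-preprojective-alg}. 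Hence the statement already holds for $T_0$.

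Now let $T$ be an arbitrary basic cluster-tilting object of $\mc_\X$. Via the chain of equivalences used in the proof of Lemma~\ref{judge from quiver}, we realize $\mc_\X \cong \mc_{(Q(w), W_w')}$ as a generalized cluster category with $W_w'$ cyclically equivalent to $W_w$ (hence still rigid, since cyclic equivalence does not alter the Jacobian ideal), and with $T_0$ corresponding to $\Gamma(Q(w), W_w')$. By the connectedness of the cluster-tilting graph $\mathcal{G}_{ct}(\mc_\X)$ established in \cite[Thm 1.2]{FG}, there is a finite sequence of vertex mutations $i_1, \ldots, i_k$ carrying $T_0$ to $T$ in $\mc_\X$. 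Iterating Lemma~\ref{l:KY2} (whose Jacobi-finiteness hypothesis is preserved throughout because $\mc_\X$ is $\Hom$-finite), we deduce that
\[
\End_{\mc_\X}(T) \;\cong\; J(\mu_{i_k} \cdots \mu_{i_1}(Q(w), W_w')).
\]

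Finally, invoking the fact that $QP$-mutation preserves rigidity (a standard consequence of the mutation theory of \cite{DWZ}), the iterated mutation $\mu_{i_k}\cdots\mu_{i_1}(Q(w), W_w')$ is a rigid $QP$. Therefore $\End_{\mc_\X}(T)$ is the Jacobian algebra of a quiver with rigid potential, as claimed. The main obstacle is to ensure that each mutation of cluster-tilting objects in $\mc_\X$ truly corresponds to a $QP$-mutation of the associated quiver with potential; this is secured by the compatibility between cluster-tilting mutation in $\mc_\X$ and quiver mutation from \cite[Thm 3.1]{BKL}, together with the generalized cluster category realization provided by Theorem~\ref{t:weighted via preprojective}, which together permit the iterated application of Lemma~\ref{l:KY2}.
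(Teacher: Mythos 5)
Your proof is correct and follows essentially the same route as the paper: start from the cluster-tilting object supplied by Theorem~\ref{t:weighted via preprojective}, whose endomorphism algebra is the Jacobian algebra of the rigid potential $W_w$ of \cite[Thm 6.4]{BIRSm}, then propagate rigidity to every other cluster-tilting object using the connectedness of $\mathcal{G}_{ct}(\mc_\X)$, Lemma~\ref{l:KY2}, and the fact that QP-mutation preserves rigidity. The only (harmless) divergence is that the paper justifies applying Lemma~\ref{l:KY2} with the rigid potential by invoking the uniqueness of the non-degenerate potential (Lemma~\ref{unique potential}) to identify $W_w$ with the canonical Keller potential $W_T$, whereas you work directly with the realization $\mc_\X\cong\mc_{(Q(w),W_w')}$ extracted from the proof of Lemma~\ref{judge from quiver}.
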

\begin{proof}
By Theorem~\ref{t:weighted via preprojective}, there is an acyclic quiver $Q$ and a reduced word $w\in W_Q$ such that $\mc_\X$ is triangle equivalent to $\underline{\Sub}\Lambda_w$. Moreover, there is a basic cluster-tilting object $T$ in $\mc_\X$ such that $Q_T=Q(w)$ and $\End_{\mc_\X}(T)\cong \End_{\underline{\Sub}\Lambda_w}(\overline{T}_w)$. 
By Lemma~\ref{l:non-deg},  $\End_{\mc_\X}(T)\cong J(Q_T,W_T)$ for  a non-degenerate  potential $W_T$ on $Q_T$. On the other hand, there is rigid potential $W_w$ on $Q(w)$ such that $\End_{\underline{\Sub}\Lambda_w}(\overline{T}_w)\cong J(Q(w),W_w)$ by \cite[Thm 6.4]{BIRSm}.

Since  $\X$ has at most three weights, by Lemma~\ref{unique potential}, there is a unique non-degenerate potential on $Q_{T}$ up to right equivalence.  Consequently, $W_T$ is right equivalent to $W_w$. It is known that the mutation of a rigid potential is rigid. For any vertex $i$ of $Q_T$, we obtain $\End_{\mc_\X}(\mu_i(T))\cong J(\mu_i(Q_T,W_w))$ by Lemma~\ref{l:KY2}.  
Now the result follows from \cite[Thm 1.2]{FG} that each basic cluster-tilting object is reachable from $T$.

\end{proof}

\end{document}